\documentclass[10pt]{article}
\usepackage{amsfonts}
\usepackage{amsmath,amsthm,amscd,amssymb,mathrsfs,setspace,mathtools}
\usepackage{latexsym,epsf,epsfig}
\usepackage{color}
\usepackage[hmargin=1in,vmargin=1in]{geometry}
\usepackage{hyperref}
\usepackage{appendix}
\usepackage{cite}
\usepackage{caption}
\newcommand\bb[1]{\mathbf{#1}}
\newcommand{\ds}{\displaystyle}

\newcommand{\divg}{{\rm div}}

\newcommand{\bu}{\mathbf u}

\theoremstyle{plain}
\newtheorem{theorem}{Theorem}[section]
\newtheorem{lemma}[theorem]{Lemma}

\newtheorem{definition}[theorem]{Definition}
\newtheorem{assumption}{Assumption}
\theoremstyle{remark}
\newtheorem{remark}{Remark}[section]
\numberwithin{equation}{section}
\numberwithin{theorem}{section}
\numberwithin{remark}{section}
\numberwithin{assumption}{section}
\numberwithin{condition}{section}
\usepackage{float}
\usepackage[T1]{fontenc} 

\usepackage{tikz}
\usetikzlibrary{arrows.meta,decorations.pathmorphing,positioning}

\begin{document}
	
	\title{Steady weak solutions to an inflow/outflow driven \\ compressible fluid-structure interaction problem}
	
\date{}

\author{%
\begingroup
\footnotesize 
\setlength{\tabcolsep}{6pt}
\renewcommand{\arraystretch}{1.05}
\begin{tabular}{c}
	\begin{tabular}{c@{\hspace{1.6em}}c@{\hspace{1.6em}}c}
		\textbf{Boris Muha} & \textbf{\v{S}\' arka Ne\v{c}asov\'a} & \textbf{Milan Pokorn\'y} \\
		\it Faculty of Sciences, University of Zagreb & \it Czech Academy of Sciences & \it {Faculty of Mathematics and Physics,}  Charles University \\
		Zagreb, HR & Prague, CZ & Prague, CZ \\
		\texttt{borism@math.hr} & \texttt{matus@math.cas.cz} & \texttt{pokorny@karlin.mff.cuni.cz}
	\end{tabular}
	\\[1.0cm]
	\begin{tabular}{c@{\hspace{1.6em}}c}
		\textbf{Justin T.\ Webster} & \textbf{Sr\dj{}an Trifunovi\'{c}} \\
		\it UMBC & \it Faculty of Sciences, University of Novi Sad \\
		Baltimore, USA & Novi Sad, RS \\
		\texttt{websterj@umbc.edu} & \texttt{srdjan.trifunovic@dmi.uns.ac.rs}
	\end{tabular}
\end{tabular}
\endgroup
}
\maketitle

\begin{abstract}
\noindent
We study a stationary 3D/2D fluid-structure interaction problem between an elastic structure described by the linear plate equation and a fluid described by the compressible Navier-Stokes equations with hard-sphere pressure and inflow/outflow boundary data. This problem is motivated by wind-tunnel configuration and by the need for physically relevant steady states about which compressible flow-plate dynamics can be linearized.

The main difficulty in the analysis is the lack of uniform estimates, both for approximate and weak solutions. In particular, the fixed-point construction for approximate solution yields a density estimate
depending on approximate parameter, while the pressure estimate for the weak solution is only finite and non-quantifiable. As a result, large pressure loads can drive outward volume growth, while low pressure regions may lead to contact and therefore domain degeneration. This necessitates a novel approach based on a Lipschitz \emph{domain-correction} (barrier) mechanism that provides a framework in which solutions can be constructed without volume blow-up or degeneration of the domain. Constrained by the possibly very large fluid pressure load, our main result is the existence of a weak solution for a sufficiently large plate stiffness.

\vskip.15cm

\noindent {\em Keywords}: fluid-structure interaction, compressible Navier-Stokes, stationary weak solutions, hard-sphere pressure, inflow/outflow, linear plate, mathematical aeroelasticity
\vskip.15cm
\noindent {\em 2010 AMS}: 74F10, 76N10, 35Q35, 35D30, 76N15, 74K20  \end{abstract}

\section{The Model}\label{sec:model}

Here we study a steady interaction problem between a linear elastic plate and a compressible viscous fluid with both inflow and outflow (see \cite{PiaseckiPokorny2014}), suggesting a type of wind-tunnel configuration. The fluid reference domain is the unit cube
$$\mathscr{O}_0:= [0,1]^3.$$
The left and the right sides of the cube
$$\Sigma_{left}:= (0,1)\times\{0\}\times (0,1), \quad \Sigma_{right} = (0,1)\times\{1\}\times (0,1)$$
contain the inflow and outflow regions
$$\Sigma_{in} \subset\subset \Sigma_{left}, \quad \Sigma_{out} \subset\subset \Sigma_{right}. $$
The bottom side
$$\Sigma_{bot}:= (0,1)\times (0,1)\times\{0\}$$
houses a flat, deformable plate at equilibrium. More precisely 
$$\Gamma\subset\subset \Sigma_{bot}$$
is a Lipschitz domain and the vertical deformation of the plate defined on $\Gamma$ is described by a scalar displacement ~ $w:\Gamma \to \mathbb{R}$ which satisfies the linear plate equation
\begin{eqnarray}
\kappa\Delta^2 w=-S^w \mathbf{f}_{fl}\cdot \mathbf{e}_z, \quad \text{ on } \Gamma. \label{plate:eq}
\end{eqnarray}
Above, $\kappa>0$ is the elastic stiffness coefficient, $S^w=\sqrt{1+ |\nabla w|^2}$ is the Jacobian of the Eulerian-to-Lagrangian transformation of the plate, and $\mathbf{f}_{fl}$ is the force applied by the fluid onto the plate with $\mathbf{e}_z:=(0,0,1)$.  The plate is taken to be clamped on its smooth boundary, $\partial \Gamma$, i.e.
\begin{equation}\label{plateBC}
w =\partial_{\nu}w = 0,~~\text{ on }~~\partial \Gamma, 
\end{equation}
where $\nu$ is the outer normal vector on $\partial\Gamma$. 
The graph of $w$ is denoted as
\begin{eqnarray*}
\Gamma^w:=\{(x,y,w(x,y)): (x,y)\in \Gamma\}.
\end{eqnarray*}

The fluid fills the region determined by the plate and the reference domain (see Figure \ref{fig:dom} below)
\begin{eqnarray*}
\mathscr{O}(w):= \Big[\big((0,1)\times(0,1)\setminus \overline{\Gamma}\big)\times (0,1)\Big]\cup \Big\{(x,y,z): (x,y)\in\Gamma,~ w(x,y)<z<1\Big\}.
\end{eqnarray*}

\begin{figure}[h!]
\centering

\includegraphics[width=\textwidth]{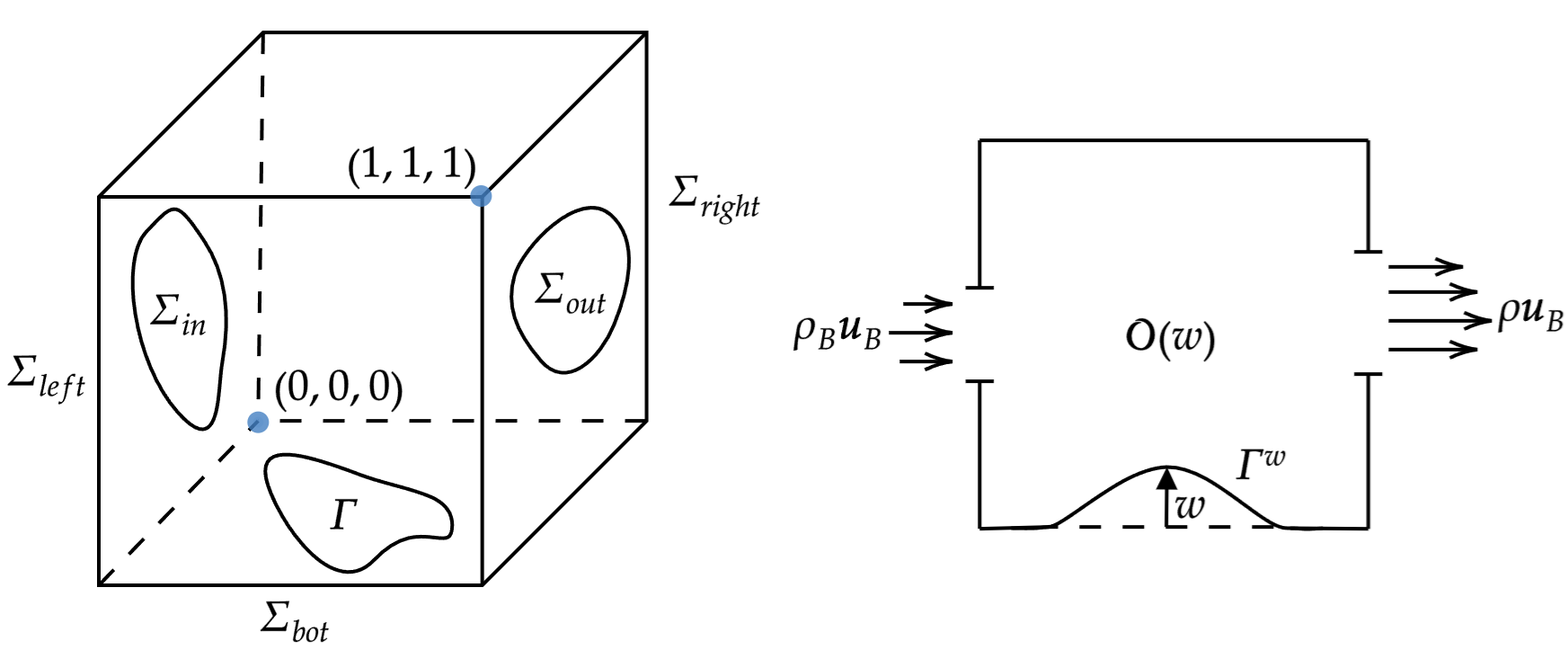}
\caption{Examples of a configuration domain $\mathscr{O}_0$ (left) and a section of a physical domain $\mathscr{O}(w)$ (right).}
\label{fig:dom}
\end{figure}

\noindent
The fluid is described by a density $\rho : \mathscr{O}(w)\to \mathbb{R}_{+}$ and a velocity $\bu : \mathscr{O}(w) \to \mathbb{R}^3$, with the pair obeying the steady compressible Navier-Stokes equations:
\begin{eqnarray}
\begin{cases}
	\divg (\rho \bu)=0  \\
	\divg  (\rho\bu \otimes \bu)+\nabla p(\rho)-\divg  \mathbb{S}(\nabla \bu)  = 0
\end{cases} \quad \text{ in } \mathscr{O}(w). \label{fluid:sys}
\end{eqnarray}
The pressure is {\em barotropic}, i.e., $p=p(\rho)$, and the viscous stress tensor is given by
\begin{eqnarray*}
\mathbb{S}(\nabla \bu):=\mu(\nabla\bu+(\nabla\bu)^T) + \lambda {\rm div}\bu \mathbb{I}, \quad \mu,\lambda>0. 
\end{eqnarray*}

\bigskip

We define, as data, the prescribed boundary velocity $\bb{u}_B: \Sigma_{in}\cup\Sigma_{out} \to \mathbb{R}^3$
and the inflow density $\rho_B:\Sigma_{in} \to \mathbb{R}_+$. Thus, the fluid boundary conditions are
\begin{equation}\label{FluidBC}
\begin{cases}
	\bu = \bu_B,&\quad \text{ on } \Sigma_{in}\cup\Sigma_{out},  \\
	\bu = 0, &\quad \text{ on } \partial \mathscr{O}(w)\setminus (\Sigma_{in}\cup \Sigma_{out}), \\
	\rho = \rho_{B}, & \quad \text{ on } \Sigma_{in}.
\end{cases} 
\end{equation}
We shall require that
\begin{eqnarray}
\begin{cases}\bu_B\cdot \bb{n} >0,& \text{ on } \Sigma_{out},\\
	\bu_B\cdot \bb{n} <0,& \text{ on } \Sigma_{in}, \end{cases} \label{FluidBC:followup}
	\end{eqnarray}
	where $\bb{n}$ is outward normal unit vector to the fluid domain. The dynamic coupling condition reads
	\begin{eqnarray}
\mathbf{f}_{fl}(x,y)&=&\big[(-p(\rho)\mathbb{I}+ \mathbb{S}(\nabla \bu))\mathbf{n}(w)\big](x,y,w(x,y)),\quad \text{ on } \Gamma, \label{dync}
\end{eqnarray}
where $\mathbf{n}(w):=\frac1{\sqrt{1+ |\nabla w|^2}}(-\nabla w, 1)$ is the outer normal unit vector to the displaced plate over the region $\Gamma$.

\bigskip

Finally, we consider the hard-sphere hypotheses, introduced in \cite{FN2018}. 
\begin{assumption}\label{ass1} The pressure $p$ satisfies the following:
\begin{itemize}
	\item There exists $\bar{\rho}>0$ such that ~$\ds \lim_{\rho\to\bar{\rho}}p(\rho)=+\infty$,
	\item $p\in C[0,\bar{\rho})\cap C^1(0,\bar{\rho})$, $p(0)=0$, $p'(\rho)\geq 0$, $\rho\in (0,\bar{\rho})$.
\end{itemize}
\end{assumption}

\section{Weak solution and main result}\label{sec:main}
We now introduce the concept of the weak solution for the configuration of interest. Our notions are motivated by \cite{FN2018}, but see also \cite{milan,book}. 
\begin{definition}\label{weak:sol}
For a given $\bb{u}_B\in C_0^2(\overline{\Sigma_{in}}\cup\overline{\Sigma_{out}})$ satisfying $\eqref{FluidBC:followup}$ and $\rho_B\in C(\Sigma_{in})$ with $0<\rho_B<\overline{\rho}$, we say that $(\rho,\bu,w)\in L^\infty( \mathscr{O}(w))\times H^1( \mathscr{O}(w))\times \big[H_0^2(\Gamma)\cap H^{\frac72}(\Gamma)\big]$ is a solution to \eqref{plate:eq}--\eqref{dync} if:
\begin{enumerate}
	\item $0\leq \rho<\overline{\rho}$, and $p(\rho) \in L^2(\mathscr{O}(w))$;
	\item $\bb{u} = \bb{u}_B$ on $\Sigma_{in}\cup\Sigma_{out}$ and $\bb{u}=0$ on $\partial\mathscr{O}(w)\setminus(\Sigma_{in}\cup\Sigma_{out})$ in the sense of traces;
	\item The continuity equation
	\begin{eqnarray*}
		\int_{ \mathscr{O}(w)}\rho\bb{u} \cdot \nabla \varphi = \int_{\Sigma_{in}} \rho_B\bu_B\cdot \bb{n} \varphi  \label{cont:weak}
	\end{eqnarray*}
	holds for all $\varphi \in C^\infty(\overline{ \mathscr{O}(w)})$ such that $\varphi_{|\Sigma_{out}} = 0$;
	\item The coupled momentum equation
	\begin{eqnarray*}
		&&\int_{ \mathscr{O}(w)} \rho \bu\otimes \bu: \nabla\boldsymbol{\varphi} + \int_{ \mathscr{O}(w)} p(\rho) \divg  \boldsymbol{\varphi}- \int_{ \mathscr{O}(w)}\mathbb{S}(\nabla \bu):\nabla\boldsymbol{\varphi} - \int_\Gamma \kappa\Delta w \Delta \psi = 0
	\end{eqnarray*}
	holds for every $\boldsymbol\varphi\in C_c^\infty(\mathscr{O}(w) \cup \Gamma^{w})$ and $\psi \in C_c^\infty(\Gamma)$ such that $\boldsymbol{\varphi}_{|\Gamma^{w}}  = \psi \bb{e}_z$.
\end{enumerate}
\end{definition}

\begin{remark}
Note that for every $\boldsymbol\varphi\in C_c^\infty(\mathscr{O}(w))$, one also has
\begin{eqnarray*}
	\int_{ \mathscr{O}(w)} p(\rho) \divg  \boldsymbol{\varphi}- \int_{ \mathscr{O}(w)}\mathbb{S}(\nabla \bu):\nabla\boldsymbol{\varphi}  = - \int_{ \mathscr{O}(w)} \rho \bu\otimes \bu: \nabla\boldsymbol{\varphi} = \int_{ \mathscr{O}(w)} (\rho \bu \cdot \nabla \bu)\cdot \boldsymbol{\varphi},
\end{eqnarray*}
by the continuity equation, so $\nabla p(\rho) -\divg \mathbb{S}(\nabla \bu)\in L^{\frac32}( \mathscr{O}(w))$. Combining this fact with $p(\rho)\mathbb{I} - \mathbb{S}(\nabla \bb{u}) \in L^{2}(\mathscr{O}(w))$ we obtain that $$\gamma_{|\Gamma^w}\left(\left[p(\rho)\mathbb{I} - \mathbb{S}(\nabla \bb{u})\right]\bb{n}(w) \right)\in W^{-\frac23,\frac32}(\Gamma),$$ and one can then decouple the plate equation from the coupled momentum equation above:
\begin{eqnarray*}
	\int_\Gamma \kappa\Delta w \Delta \psi = \langle S^w\gamma_{|\Gamma^w}\left[p(\rho)\mathbb{I} - \mathbb{S}(\nabla \bb{u})\right]\bb{n}(w) ,\psi \rangle_{W^{-\frac23,\frac32}(\Gamma)\times W^{\frac23,3}(\Gamma)} 
\end{eqnarray*}
for every $\psi \in H_0^2(\Gamma)$. Here $\gamma_{|\Gamma^w}$ denotes the \emph{Lagrangian} (pull-back) trace induced by the boundary
parametrization $$\Phi:\Gamma\to\Gamma^w,~~\Phi(X)=(X,w(X)).$$ Such Lagrangian/ALE trace operators are standard in the weak
formulation of moving-boundary FSI; see, e.g., \cite{Grandmont2008,ChambolleDesjardinsEstebanGrandmont2005,MuhaCanic2013}.
This regularity statement notably differs from the \emph{evolutionary} FSI case; in addition to the aforementioned references, see also \cite{Breit,berlin11} for representative formulations.
\end{remark}
\vskip.5cm

Our main result concerns the existence of a weak solution in the presence of inflow/outflow data for a sufficiently large stiffness $\kappa$. The stiffness threshold depends on the data, albeit in an indirect and non-quantifiable way, as we explain below. The main result is stated as follows:
\begin{theorem}[\textbf{Main result}]\label{main}
Let Assumption \ref{ass1} be in force. Let $\bb{u}_B\in C_0^2(\overline{\Sigma_{in}}\cup\overline{\Sigma_{out}})$ satisfying $\eqref{FluidBC:followup}$ and $$\int_{\Sigma_{in}\cup\Sigma_{out}} \bb{u}_B\cdot \bb{n}\geq 0,$$ and assume $\rho_B\in C(\Sigma_{in})$ with $0<\rho_B<\overline{\rho}$. Then, there is a lower threshold $\kappa_0>0$ such that for every $\kappa\geq \kappa_0$, there exists a weak solution $(\rho,\bb{u},w)$ in the sense of Definition \ref{weak:sol}.
\end{theorem}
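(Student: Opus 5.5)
We prove Theorem \ref{main} by a fixed point in the plate displacement, combined with a domain-correction (barrier) step so that the fluid domain stays uniformly Lipschitz and non-degenerate. Fix a small $\delta>0$ (so that $\Gamma$ stays at positive distance from the top face $z=1$) and a large constant $M>0$. Define the admissible set
$$K_M:=\{w\in H_0^2(\Gamma)\cap W^{1,\infty}(\Gamma): \|w\|_{W^{1,\infty}(\Gamma)}\le M,\ -1+\delta\le w\le 1-\delta\}.$$
For each $w\in K_M$, the domain $\mathscr{O}(w)$ is Lipschitz with constants depending only on $M,\delta$. To make the map $w\mapsto\mathscr{O}(w)$ robust, we first replace $w$ by a truncated or corrected displacement $T(w)$, which is Lipschitz with controlled constant and bounded away from the top face. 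This is the barrier. On $\mathscr{O}(T(w))$ we solve the steady compressible Navier--Stokes problem with inflow/outflow and hard-sphere pressure, following \cite{FN2018,PiaseckiPokorny2014}. The construction uses the usual approximation:

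\begin{itemize}
\item artificial viscosity $\varepsilon\Delta\rho$ in the continuity equation, with a suitable inflow boundary treatment;
\item a truncated pressure $p_\eta$ with $p_\eta\to p$;
\item a Galerkin or Schauder fixed point at the approximate level.
\end{itemize}

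This gives a weak solution $(\rho,\bu)$ with $0\le\rho<\bar\rho$ and $p(\rho)\in L^2$. The limit passages in $\varepsilon,\eta$ rely on the energy inequality (the sign condition $\int\bu_B\cdot\bb n\ge0$ is used to control the boundary flux terms), Bogovskii-type pressure estimates on Lipschitz domains, and effective viscous flux / renormalization for strong convergence of $\rho$. The hard-sphere bound $\rho<\bar\rho$ passes to the limit as in \cite{FN2018}. These estimates are finite but not quantified: they give some bound $\|p(\rho)\|_{L^2}+\|\bu\|_{H^1}\le C(M,\delta,\text{data})$.

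Next we solve the plate equation $\kappa\Delta^2\tilde w=F(w)$, where $F(w)$ is the normal traction, understood as in the remark after Definition \ref{weak:sol}: it lies in $W^{-\frac23,\frac32}(\Gamma)$ with norm $\le C(M,\delta)$. We then need elliptic regularity up to $H^{\frac72}(\Gamma)$. This is the regularity claimed in Definition \ref{weak:sol}, and it holds provided the traction is in $H^{-\frac12}$. To get that, we use the $L^2$ integrability of $p$ and $\nabla\bu$ together with the $L^{3/2}$ bound on the divergence of the stress. If this regularity is not available, we settle for $W^{\frac{10}3,\frac32}\hookrightarrow W^{1,\infty}$ in 2D. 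In either case $\|\tilde w\|_{W^{1,\infty}}\le C(M,\delta)/\kappa$. Choosing $\kappa_0$ so large that $C(M,\delta)/\kappa\le\min(M,\delta)$ gives $\tilde w\in K_M$ with $T(\tilde w)=\tilde w$. Hence the barrier is inactive at any fixed point, and the fixed point is a genuine solution. This is where the non-quantifiable threshold $\kappa_0$ comes from.

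To close, we apply Schauder's (or Kakutani's, since the fluid solution need not be unique) fixed point theorem to the set-valued map $w\mapsto\tilde w$ on the convex compact set $K_M\cap\{\|w\|_{H^{3}}\le R\}$ in the $W^{1,\infty}$ topology. Convexity of the solution set may fail, so we instead run the fixed point at the approximate level $(\varepsilon,\eta)$, where the fluid solution is obtained constructively. There we need a density bound depending on $\varepsilon$, so $\kappa$ is chosen after the uniform-in-$\varepsilon$ bound. We then pass $\varepsilon,\eta\to0$ jointly with $w_\varepsilon\to w$ in $W^{1,\infty}$ (compact via the $H^{7/2}$ bound), transporting estimates through Lipschitz diffeomorphisms between $\mathscr{O}(w_\varepsilon)$ and $\mathscr{O}(w)$.

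The main obstacle is exactly this point. The approximate density estimates depend on the approximation parameter, and the limit pressure bound is not quantified. So we must ensure that the $\kappa$-threshold can be fixed using only bounds that are uniform in the approximation, namely the energy bound and the $L^2$ pressure bound derived uniformly over $w\in K_M$ via the uniform Lipschitz character. The barrier $T$ is what makes the domain-dependent constants uniform before smallness of $\tilde w$ is known.
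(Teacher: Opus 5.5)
\textbf{There is a genuine gap.} Your overall architecture matches the paper's: work on a corrected domain so the geometry stays uniformly Lipschitz, then show the correction is inactive for large $\kappa$. The missing piece is the step that actually proves Theorem~\ref{main}.

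You assert that the pressure bound $\|p(\rho)\|_{L^2}\le C(M,\delta,\text{data})$ holds uniformly over $w\in K_M$, and hence independently of $\kappa$, ``via the uniform Lipschitz character.'' Uniform Lipschitz geometry only gives uniform Bogovski\u{\i}, Korn and embedding constants. In the hard-sphere inflow/outflow setting the pressure bound has the form $C((\beta-1)^{-1},\|\bu_B\|)$, where $\beta>1$ measures how far the mean density stays below $\bar\rho$. This $\beta$ comes from a contradiction argument for a given family of solutions, and the geometry does not control it. The solution, and therefore $\beta$, depends on $\kappa$. So choosing $\kappa_0$ with $C/\kappa_0\le\min(M,\delta)$ is circular until you show $\beta$ is independent of $\kappa$.

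The paper supplies exactly this argument, and it is essentially its whole proof of the main theorem:
\begin{itemize}
\item For every $\kappa>0$ it builds a corrected-domain solution, with the barrier possibly active, satisfying $\|\bu_\kappa\|_{H^1}\le C$ independently of $\kappa$.
\item Suppose the mean densities tended to $\bar\rho$ along some sequence $\kappa_n$. Compactness of $[w_{\kappa_n}]_{cor}$, Lemma~\ref{conv:lemma} (strong $L^2$ convergence of $\bu_{\kappa_n}$) and $\rho_{\kappa_n}\le\bar\rho$ let one pass to the limit in the continuity equation with $\rho\equiv\bar\rho$.
\item Test functions concentrating on $\Sigma_{left}$ then give $\int_{\Sigma_{in}}\rho_B\bu_B\cdot\bb{n}=\bar\rho\int_{\Sigma_{in}}\bu_B\cdot\bb{n}$, contradicting $\rho_B<\bar\rho$.
\item Only after this do $\|p(\rho_\kappa)\|_{L^2}\le C$ and $\kappa\|w_\kappa\|_{C^{0,1}}\le C$ hold uniformly in $\kappa$, and $\kappa_0=4C$ works.
\end{itemize}
The same compactness--contradiction argument is what you would need to justify your claimed uniformity over $K_M$.

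\textbf{The barrier cannot be switched off at the approximate level.} The approximate density and pressure bounds depend on the approximation parameters. In the paper, $\|p_{\varepsilon,\delta}(\rho_\varepsilon)\|_{L^2}\le C(\delta^{-1})$, and $\beta$ exists only along a subsequence as the artificial density viscosity is removed. So no $\kappa$ fixed in advance makes $T$ inactive there, and choosing $\kappa$ ``after the uniform-in-$\varepsilon$ bound'' presupposes a bound you do not yet have.

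For the same reason, the $H^{\frac72}$ compactness of the displacements you invoke is not uniform when that viscosity is removed. The paper uses only the bound $\|[w_\delta]_{cor}\|_{C^{0,1}}\le\frac14$ at that stage, and obtains $W^{\frac{10}3,3}$ bounds only after the pressure estimate is in hand. The correction must stay active through the entire construction (Theorem~\ref{not:main} for every $\kappa$) and be removed only for the limit solutions.

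\textbf{The fixed point is still set-valued.} Moving to the approximate level does not cure the set-valuedness of $w\mapsto\tilde w$, because the approximate fluid problem is itself solved by a fixed point. The paper instead runs a single Schaefer fixed point in $(\bu,w)$. In it the density sub-problem (monotone operators), the linearized momentum sub-problem and the plate sub-problem (both Lax--Milgram) are each uniquely solvable.
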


Before providing commentary on our main theorem, we first emphasize the fundamental mechanisms that makes  stationary
weak theory delicate in this fluid-structure setting. Inflow/outflow conditions are physical for this class of problems, but the available pressure control is
geometry-driven and does not provide a bound on solutions controlled by the problem data. Consequently, {\bf there is nothing intrinsic to the coupled
model that precludes \emph{geometric degeneration} of the unknown domain $\mathscr O(w)$}. Excessively large pressure loads
may drive the plate outward and lead to {volumetric blow-up} of $\mathscr{O}(w)$;  low pressure (cavitation) effects, relative to the elastic response, may allow the plate to drift toward the rigid wall and produce near contact. In the latter case, relevant Korn/Poincar\'e/Bogovski\u{\i}
constants can deteriorate and destroy a priori estimates. The hard-sphere pressure law as in \cite{FN2018} enforces an upper density constraint, but, by itself, does not
rule out these  degeneracies. For this reason, we view some additional form of
\emph{constraint} or {\em intrinsic barrier} as essential for producing a physical stationary weak solution. 

In order to avoid fundamentally modifying the structure of the equations, we here
realize such control through the stiffness coefficient $\kappa$, together with a Lipschitz
\emph{domain-correction} device (a barrier method) that restores \emph{uniform} geometric control at the
approximate level. The combination thereof  yields uniform estimates needed for compactness and limit passage for sufficiently large stiffness values.

\medskip
Now we outline the central points concerning our main result.
\begin{enumerate}
\item \textbf{Pressure estimate and total fluid mass.}
In the framework of weak solutions for the problem at hand, intrinsically controlling the total mass seems to be out of reach. Mass is constrained only by
the (unknown) volume and the maximal density, as dictated by the hard-sphere pressure but not in any uniform way that
is usable in a fixed-point argument. The available pressure estimate is obtained  contradiction (as in \cite{FN2018})
and provides  a finite but non-quantifiable bound.  As a result,  large pressure loads may  drive excessive deformations, but assumptions on total mass or total volume of the domain indirectly constrain the maximal plate displacement, which is a priori unknown. This is a principal reason why the stiffness $\kappa$ must be
be sufficiently large to provide a structural counter-balance in the stationary weak setting.

\item \textbf{Construction of solutions and domain correction.}
The approximate problem is supplemented by artificial  dissipation for the density, the pressure is approximated,
and the fluid domain is regularized. Approximate solutions are obtained via the Schaefer fixed point theorem
and may, exhibit large plate displacements and  unfavorable geometries.
To prevent degeneration at the approximate level, and to 
constrain the deformable boundary, we introduce a Lipschitz \emph{domain correction}---a barrier method. This yields a family of approximate solutions that may initially involve a mismatch between the ``real'' and corrected domains. The final step
is to show that for sufficiently large stiffness, $\kappa$, the correction becomes inactive. This is
proved through a contradiction argument in the style of \cite[Section 7.1]{FN2018}. This yields the
existence of a threshold $\kappa_0$ such that for all $\kappa\ge \kappa_0$ the original coupled problem admits
a weak solution. The value $\kappa_0$ depends on the data, but not in an explicit manner,
reflecting again the non-quantitative nature of the pressure control in the hard sphere setting.

\item \textbf{Boundary data.}
The boundary velocity $\bb{u}_B$ may be large (while belonging to $C_0^2(\Sigma_{in}\cup\Sigma_{out})$).
The inflow density $\rho_B\in C(\Sigma_{in})$ is bounded by the maximal density, as dictated by the hard-sphere
law, but the boundary pressure $p(\rho_B)$ can be arbitrarily large. In this sense, the main theorem here is a \emph{large
	data}  result---see  \cite[Remark 2.2]{FN2018}. The high regularity imposed on the boundary data is used primarily to construct
extensions/lifts inside and outside of the variable domain in a compatible way  with the stationary theory.

\item \textbf{Hard-sphere pressure law.}
A singular hard-sphere pressure law is by now common in the compressible fluid literature. 
Analytically, it differs from the standard isentropic law, where $p(\rho)\sim \rho^\gamma$, but it is currently the
state-of-the-art framework for which stationary inflow/outflow results on a fixed domain are available
\cite{FN2018}. The induced upper density bound is crucial for the renormalization of the continuity equation,
which in turn is essential to obtain compactness of the density in the final limiting procedure (to remove artificial dissipation).\footnote{ The existence of non-steady weak solutions of the hard pressure case in a bounded domain with no-slip boundary conditions were studied  by Feireisl and Zhang \cite[Section 3]{MR2646821}. The case of  general inflow/outflow was investigated  by Choe, Novotn\' y and Yang \cite{MR3912678}. }

\item \textbf{Applications.} We stress the broader motivation here: much of the compressible flow-plate literature on aeroelastic flutter and
stability of flow-plate systems is based on linearization about a steady FSI configuration. Theorem~\ref{main} provides a
mathematically justified and generally nontrivial steady state in an inflow/outflow driven compressible setting. The existence of a weak solution here
supplies a concrete base state for subsequent linearizations and related spectral/semigroup analyses as in the work of Avalos and Geredeli et al., see for instance \cite{AvalosGeredeli2016-MMAS,AvalosGeredeli2020-JEE,AGW2018-DCDSB,Geredeli2021-MMAS,KLT2018}.
\end{enumerate}

\begin{remark}[Generalizations]\label{rem:generalizations}
We briefly describe several directions in which the present existence result can be extended.
We do not pursue these items here, since the main ideas (uniform geometric control, coupled compactness,
and the removal of regularizations) are made transparent in the model/configuration as described above.

\begin{itemize}
	\item {\bf Geometry.}
	The choice of the unit cube $\mathscr{O}_0=[0,1]^3$ is made for concreteness and  its natural
	interpretation representing a wind-tunnel. The analysis extends to general bounded reference domains
	in $\mathbb{R}^3$, provided the inflow/outflow portions $\Sigma_{in}$ and $\Sigma_{out}$ are compactly contained
	in open subsets of sufficiently smooth boundary components. The latter ensures that the prescribed boundary
	data admit ``lifts'', while the remaining portion of the boundary supports the no-slip condition, cf.\ \cite{Choe2018,FN2018,Choe2019}. A salient  requirement for the coupled
	problem is that the deformable  boundary (the plate region) be separated from the inflow/outflow
	regions, so that the variable interface does not interfere with  prescribed fluxes and so  associated
	extension/Korn/Bogovski\u{\i} constants are controlled uniformly.
	
	A further geometric variant is to replace the flat plate by a reference surface with curvature (e.g.\ a linear
	shell). At the level of stationary solutions there is no obstruction to such a nontrivial equilibrium geometry, and the
	arguments here can be adapted, provided the elastic operator remains elliptic with comparable
	coercivity and trace control. Related FSI settings with curved elastic boundaries (in evolutionary regimes) can be
	found, for instance, in \cite{Breit,cr-full-karman,berlin11}.
	
	\item {\bf Plate nonlinearity.}
	For clarity we take the structure to satisfy a \emph{linear} clamped plate equation, and we use the stiffness
	parameter $\kappa$ as the principal mechanism balancing  large pressure loads. In this stationary weak
	framework we do not expect that incorporating a ``large deflection'' superlinearity can, by itself,
	rule out infinite-volume limits. This is again due to the fact that pressure
	control in the hard-sphere inflow/outflow setting is not quantitative in terms of the data---see
	\cite{FN2018,Lions}. On the other hand, our approach is compatible with many standard \emph{semilinear} plate
	nonlinearities, such as those of Kirchhoff, Berger, or von K\'arm\'an type; we require that a plate nonlinearity is locally Lipschitz on the natural plate
	energy space and admits a ``good" potential energy. Such hypotheses are classical
	in mathematical aeroelasticity and plate theory; see, e.g., \cite{supersonic,karmanplates,CLDW2016-MESA}.
	
	\item {\bf Other barrier mechanisms.}
	The domain-correction device invoked is a barrier method  enforcing an a priori geometric constraint
	at the approximate level. While we implement this via a stiffness threshold
	$\kappa\ge \kappa_0$ (and an associated Lipschitz control of the deformable boundary), other physically relevant
	constraining mechanisms could resolve the interplay between geometry-dependent constants, very weak pressure bounds, and strong plate
	loading. A natural candidate is a geometric smallness parameter tied to the \emph{relative size} of the deformable
	boundary portion, as $\Gamma$ compared to the overall fluid chamber. At present, however, such dependence is not easily quantifiable, as the
	relevant Korn/Poincar\'e/Bogovski\u{\i} constants and critical embeddings are not tracked sharply; thus, we opt to work
	directly with stiffness as a clean and transparent mathematical barrier mechanism in the present paper.
	
	\item {\bf Source force.}
	Additional source forces acting on the fluid or  plate can also be directly included; however, as the inflow/outflow
	mechanism is of primary interest here, we omit such forcing for clarity. This is in line with \cite{FN2018}---see Remark 2.4 for more details about the accommodation of internal sources. 
\end{itemize}
\end{remark}

\section{Literature review}

Fluid-structure interaction (FSI) models  are  ubiquitous in engineering applications, especially those in which an elastic component is exposed to an adjacent or surrounding flow of fluid. Canonical examples include aeroelastic structures (aircraft skins, panels, and control surfaces), propulsion systems, bridge and building components in high winds, and a variety of ducted-flow configurations---see \cite{dowell1,CLDW2016-MESA,KLT2018} and references therein for examples. In  such settings, the interaction may not be adequately described by incompressible or inviscid flow regimes. Indeed, one may  need to account for both \emph{compressible} effects (e.g., due to non-negligible Mach number) as well as  \emph{viscosity}. The latter is especially true when boundary layers, shear, or nontrivial inflow/outflow profiles become prominent. See the references \cite{BA62,bolotin,dowell1,chorin-marsden} for the relevant general background and motivation on such flows and related FSIs. From the mathematical point of view, it is precisely the presence of additional nonlinear fluid equations, coupling, and compressible flow constraints that make the analysis of compressible FSI  delicate in several ways not seen for the incompressible counterpart.

In the last two decades, the mathematical literature on fluid/flow-plate interactions has grown significantly. Depending on the application and motivation for the study, the literature can be divided into two major directions: (i) geometrically nonlinear and (ii) geometrically linear interaction problems.  

The nonlinear interaction class of problems does not neglect changes of the fluid domain and, as such, are highly nonlinear and technical in the treatment of time-varying, unknown domains. In particular, the possibility of domain degeneration may dictate the lifespan of the solution, while the regularity and many available tools in the analysis are highly affected by the unknown shape of the fluid domain. Moreover, even in configurations where one aims for weak solutions, a moving-domain setting forces one to confront the stability of geometric constants (e.g., Poincar\'e/Korn/Bogovski\u{\i} constants) under boundary deformations, the latter of which is a persistent obstacle in compactness arguments \cite{ChambolleDesjardinsEstebanGrandmont2005,Boulakia2005CR,MuhaCanic2013,MuhaCanic2019}.

Linear interaction problems \cite{dcds,CLDW2016-MESA} neglect the global domain change and, as such, are more suitable for stability analysis and long-time behavior. Nonlinearity may be included in the elastic structure or flow equations, {\em but the interface is taken to be fixed}. Such analyses have developed along several complementary lines: semigroup well-posedness and stability of linearized models \cite{supersonic,lasweb,AGW2019-JMAA,Chu2013-comp}, frequency-domain and spectral techniques for decay rates \cite{AvalosGeredeli2016-MMAS,AvalosGeredeli2020-JEE, Geredeli2021-MMAS}, and nonlinear/dynamical-systems analyses leading to attractors and qualitative stabilization phenomena \cite{ChuRyz2011,karmanplates}. In the \emph{compressible} regime, there are many works that focus on \emph{evolution} models, often in situations where simplifying assumptions on the flow permit a reduction to wave-type dynamics (potential flow) or to linearized compressible PDE models. We refer to the surveys \cite{berlin11,dcds,CLDW2016-MESA} and the references therein for a broader perspective. Within the compressible viscous setting, a central motivating contribution is \cite{Chu2013-comp}, which first studies the time-dependent interaction of a plate with a linearized compressible viscous fluid in a cavity configuration.

From \cite{Chu2013-comp}, the most closely aligned work with that at hand  are recent investigations by
Avalos et al. in which \emph{linearized} compressible
Navier-Stokes equations (NSE) are coupled to a nonlinear plate and analyzed via semigroup methods. In particular, in \cite{AGW2018-DCDSB,AGW2019-JMAA}, the compressible NSE are linearized about a prescribed steady state and the resulting coupled dynamics are treated on the natural finite-energy phase space. These works address, among other issues, the nontrivial operator structure induced by compressibility and non-dissipative (and potentially de-stabilizing) coupling at the interface, and they provide a framework in which questions of stability can be formulated and analyzed \cite{AvalosGeredeli2020-JEE,AvalosGeredeli2016-MMAS}. 

However, the aforementioned program requires a physically meaningful stationary configuration about which the coupled dynamics can be linearized; yet the existence of such steady states, driven by inflow/outflow data in a compressible setting, is far from obvious. One must ask: 
\vskip.15cm
\noindent\textit{Do physically relevant steady states for compressible flow-plate interactions exist, and under what conditions?}
\vskip.15cm
\noindent In the incompressible regime, stationary FSI problems have been 
considered previously: in \cite{Grandmont2002}, existence of regular 
solutions is established for a three-dimensional steady 
Navier--Stokes--elasticity system under a small data assumption, 
while \cite{Calisti2023} addresses shape sensitivity for a 
two-dimensional stationary Stokes--elasticity configuration, also 
in the small-load regime. In the compressible viscous setting, the existence theory for the stationary fluid equations with inflow/outflow data is both recent and technically demanding, and only available in the hard-sphere framework 
\cite{Lions,FN2018,Choe2019}. The compressible flow-plate model studied in our paper thus represents a natural continuation of the current body of work as it lies 
at the intersection of two demanding analytical programs: stationary compressible NSE with general boundary conditions, and nonlinear boundary-coupled FSI with an a priori unknown interface geometry. For this problem, we are able to show existence of a weak solution under the condition of sufficiently large plate stiffness, giving a direct answer to the above question.

\section{Construction of weak solutions on a corrected domain}
In this section, the goal is to construct a weak solution on a domain which is a priori restricted. There are multiple reasons for this choice, as mentioned in the introduction. Namely, the estimates which are available for the fluid pressure seem to be non-quantifiable, giving us a finite pressure but bounded by a constant which does not explicitly depend on data. Moreover, the fixed-point construction of the approximate solutions will not be able to ensure smallness of the plate in any norm. Both of the above issues may yield a degeneration, as the plate can reach the rigid boundary, or blow-out (deform downwards unboundedly) resulting in infinite domain volume. Also, the Lipschitz norm of the plate may be large,  increasing the pressure and, consequently, other estimates. 

Thus, we introduce a correction function
\begin{eqnarray*}
f_{cor}(x):= \begin{cases}
	1,& \text{ for } 0\leq x \leq \frac14, \\
	4x,& \text{ for } x > \frac14,
\end{cases}
\end{eqnarray*}
and the corrected domain displacement is defined as
\begin{eqnarray*}
[w]_{cor}:= \frac{w}{f_{cor}(\|w\|_{C^{0,1}(\Gamma)})}.
\end{eqnarray*}
with the corrected domain is then naturally defined as $\mathscr{O}([w]_{cor})$. Note that $\mathscr{O}([w]_{cor})$ cannot degenerate no matter how large $w$ in $C^{0,1}(\Gamma)$ norm is, since $[w]_{cor}\leq \frac14$. We now introduce the concept of a weak solution on a corrected domain:

\begin{definition}\label{cor:dom:def}
For a given $\bb{u}_B\in C_0^2(\overline{\Sigma_{in}}\cup\overline{\Sigma_{out}})$ satisfying $\eqref{FluidBC:followup}$ and $\rho_B\in C(\Sigma_{in})$ with $0<\rho_B<\overline{\rho}$, we say that $(\rho,\bu,w)\in L^\infty( \mathscr{O}([w]_{cor}))\times H^1( \mathscr{O}([w]_{cor}))\times \big[H_0^2(\Gamma)\cap H^{\frac72}(\Gamma)\big]$ is a weak solution on a corrected domain if:
\begin{enumerate}
	\item $0\leq \rho<\overline{\rho}$,~ $p(\rho) \in L^2(\mathscr{O}([w]_{cor}))$;
	\item $\bb{u} = \bb{u}_B$ on $\Sigma_{in}\cup\Sigma_{out}$ and $\bb{u}=0$ on $\partial\mathscr{O}([w]_{cor})\setminus(\Sigma_{in}\cup\Sigma_{out})$ in the sense of traces;
	\item The continuity equation
	\begin{eqnarray*}
		\int_{ \mathscr{O}([w]_{cor})}\rho\bb{u} \cdot \nabla \varphi = \int_{\Sigma_{in}} \rho_B\bu_B\cdot \bb{n} \varphi  \label{cont:weak*}
	\end{eqnarray*}
	holds for all $\varphi \in C^\infty(\overline{ \mathscr{O}([w]_{cor})})$ such that $\varphi_{|\Sigma_{out}} = 0$;
	\item The coupled momentum equation
	\begin{eqnarray*}
		&&\int_{ \mathscr{O}([w]_{cor})} \rho \bu\otimes \bu: \nabla\boldsymbol{\varphi} + \int_{ \mathscr{O}([w]_{cor})} p(\rho) \divg  \boldsymbol{\varphi}- \int_{ \mathscr{O}([w]_{cor})}\mathbb{S}(\nabla \bu):\nabla\boldsymbol{\varphi} - \int_\Gamma \kappa\Delta w \Delta \psi = 0
	\end{eqnarray*}
	holds for every $\boldsymbol\varphi\in C_c^\infty(\mathscr{O}([w]_{cor}) \cup \Gamma^{[w]_{cor}})$ and $\psi \in C_c^\infty(\Gamma)$ such that $\boldsymbol{\varphi}_{|\Gamma^{[w]_{cor}}}  = \psi \bb{e}_z$;
\end{enumerate}
\end{definition}
The goal of this section is the following foundational result:

\begin{theorem}\label{not:main}
Let $\bb{u}_B\in C_0^2(\Sigma_{in}\cup\Sigma_{out})$ satisfy $\eqref{FluidBC:followup}$ and $\int_{\Sigma_{in}\cup\Sigma_{out}} \bb{u}_B\cdot \bb{n}\geq 0$, let $\rho_B\in C(\Sigma_{in})$ with $0< \rho_B<\overline{\rho}$. Then, there exists a weak solution $(\rho,\bb{u},w)$ on a corrected domain in the sense of Definition \ref{cor:dom:def} such that
$$\| \bu \|_{H^1(\mathscr{O}([w]_{cor})))}  \leq C$$
where constant $C$ only depends on the geometry, fluid viscosity constants $\mu,\lambda$ and the boundary velocity $\bb{u}_B$.
\end{theorem}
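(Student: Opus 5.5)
The plan is to follow the Feireisl–Novotný construction \cite{FN2018} for the stationary inflow/outflow problem on a fixed domain. We embed it in a Schaefer fixed-point argument over the plate displacement. The correction $[w]_{cor}$ keeps every domain $\mathscr{O}([w]_{cor})$ uniformly Lipschitz and non-degenerate, since $\|[w]_{cor}\|_{C^{0,1}}\le \frac14$. All geometric constants (Korn, Poincaré, Bogovski\u{\i}, trace, extension) are therefore uniform in $w$.

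\textbf{First step: approximate problem.} We fix $\tilde w$ in a ball of $C^{1,\alpha}(\Gamma)$ (compactly embedded in $H^{7/2}\cap H^2_0$). On the fixed domain $\mathscr{O}([\tilde w]_{cor})$ we solve the regularized fluid problem with:
\begin{itemize}
\item artificial density dissipation $\varepsilon\rho-\varepsilon\Delta\rho$, with Neumann/Robin boundary terms encoding the inflow $\rho_B$;
\item a pressure $p_\delta$ truncated and extended beyond $\overline\rho$ (e.g. $p(\min\{\rho,\overline\rho-\delta\})+$ a large power);
\item a Galerkin or Leray–Schauder scheme for the velocity, after subtracting a solenoidal lift $\bu_\infty$ of $\bu_B$. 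The lift vanishes near $\Gamma$, is constructed once in $\mathscr{O}_0$ away from the plate, and is independent of $w$.
\end{itemize}
The sign conditions \eqref{FluidBC:followup} and $\int\bu_B\cdot\bb n\ge0$ give the standard energy inequality. Testing with $\bu-\bu_\infty$ and using the pressure potential $P(\rho)$ together with the inflow boundary term gives
$$\|\bu\|_{H^1}\le C(\text{geometry},\mu,\lambda,\bu_B),$$
uniformly in $\varepsilon,\delta,\tilde w$. The key point is that the convective and pressure contributions are absorbed because the lift is supported away from the moving boundary and the geometric constants are uniform.

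We then define $w=\mathcal{T}(\tilde w)$ as the solution of $\kappa\Delta^2 w=$ (normal stress trace) on $\Gamma$. The fluid solution at the approximate level is regular, by elliptic regularity for the $\varepsilon$-problem on domains whose boundary is as smooth as $[\tilde w]_{cor}$. Hence $w\in H^{7/2}$, and $\mathcal T$ is compact and continuous. The Schaefer bound for $w=s\mathcal T(w)$ may depend on $\varepsilon$ through the density, which is acceptable at this level.

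\textbf{Limit passages.} We let $\delta\to0$, then $\varepsilon\to0$. Because the corrected domains have uniformly bounded Lipschitz character, we transfer them to $\mathscr O_0$ via a uniformly bi-Lipschitz map, or extend by zero, and pass to the limit in the geometry using $C^{1,\alpha}$ compactness of the plate. The uniform $\bu$ bound survives; this is the claimed estimate.

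\textbf{Main obstacle.} The hardest part is the pressure and density at $\varepsilon\to0$. We need an $L^2$ bound on $p(\rho)$ independent of $\varepsilon$, and $\rho<\overline\rho$ in the limit. We follow \cite[Section 7]{FN2018}: Bogovski\u{\i} test functions on the uniformly Lipschitz corrected domain, plus a contradiction argument giving a finite, non-quantified pressure bound. The plate equation then gives $\|w\|_{H^{7/2}}\le C/\kappa\cdot(\dots)$, bounded uniformly in $\varepsilon$. This supplies the domain compactness needed to identify the momentum equation, including the interface term $\int\kappa\Delta w\Delta\psi$.

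Strong convergence of $\rho$ comes from the effective viscous flux identity. It is localized away from the moving boundary, with uniform control near it supplied by the corrected geometry. This is combined with renormalization of the continuity equation, which is legitimate since $\rho$ is bounded by $\overline\rho$. The step I expect to be delicate is handling the boundary layer of the effective flux near $\Gamma^{[w]_{cor}}$ as the domains vary. I would first try to pull everything back to a fixed reference domain via the $w$-dependent flattening map and run the argument there.
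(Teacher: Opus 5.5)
Your treatment of the limit passages matches the paper: uniformly Lipschitz corrected domains, removing the pressure regularization before the density dissipation, the contradiction-based Bogovski\u{\i} pressure bound of \cite[Section 7]{FN2018}, the plate bound restoring domain compactness, and a localized Lions argument. There are, however, two genuine gaps.

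\textbf{The fixed-point map is not well defined.} You define $\mathcal T(\tilde w)$ in two steps. First you solve the full nonlinear regularized fluid problem on $\mathscr O([\tilde w]_{cor})$ by a Galerkin or Leray--Schauder scheme. Then you solve the plate equation with the resulting traction. That fluid problem, with its convective term, nonlinear transport and pressure, is only known to have \emph{a} solution, not a unique one. So $\tilde w\mapsto w$ is not a single-valued map. The claimed continuity of $\mathcal T$ fails for the same reason: for $\tilde w_n\to\tilde w$, compactness only gives a subsequence converging to \emph{some} fluid solution on the limit domain. Schaefer's theorem therefore does not apply. Set-valued substitutes such as Kakutani--Glicksberg--Fan would need convex solution sets, which you do not have. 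The paper avoids this by taking the fixed point in the pair $(\tilde\bu,\tilde w)$ and freezing $\tilde\bu$ in the transport and convective terms. Then each sub-step is uniquely solvable: the damped continuity equation by monotone operators and \cite[Lemma 4.1]{FN2018}, the momentum equation (now linear) by Lax--Milgram, and the plate equation likewise. Uniqueness plus compactness gives continuity. The Schaefer bound for $(\bu,w)=\theta\mathcal T(\bu,w)$ then comes from the renormalized continuity equation.

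\textbf{The uniform velocity bound is not justified as you set it up.} This bound is the estimate the theorem asserts, and the whole contradiction argument relies on it. By the divergence theorem, a solenoidal lift of $\bu_B$ exists only if $\int_{\Sigma_{in}\cup\Sigma_{out}}\bu_B\cdot\bb n=0$. The hypothesis allows strictly positive net flux, which is why the paper uses the lift of Lemma~\ref{ext:op} (from \cite[Proposition 3.4]{FN2018}) with $\divg\bu_B\ge 0$. With that lift, $\int p_{\varepsilon,\delta}(\rho)\,\divg\bu_B$ does not vanish. No pressure bound is available at this stage, so it is only the sign $p\,\divg\bu_B\ge 0$ that lets you discard it. The convective term is also not absorbed because the lift is supported away from $\Gamma$; that only makes the lift independent of $w$. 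It is absorbed through the Hopf-type smallness $\int|\mathbf v\cdot\nabla\mathbf v\cdot\bu_B|\le\theta\|\nabla\mathbf v\|_{L^2}^2$ of Lemma~\ref{ext:op}, combined with $T(\rho)\le\overline\rho$, as in \cite[Section 5]{FN2018}. With these two properties built into the lift, your energy estimate becomes uniform in the regularization parameters and in $w$.
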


\subsection{Preliminaries: minimal and maximal domain, fluid velocity extension}\label{prel:sec}
First, let $\Gamma'$ be a smooth domain such that $\Gamma\Subset\Gamma'\Subset \Sigma_{bot}$. Now, since $|[w]_{cor}|\leq \frac14$, we fix a function $w_{max}\in C_c^\infty(\Gamma')$ such that 
\begin{eqnarray*}
0\leq w_{max}\leq \frac12& \quad  &\text{ on } \Gamma', \\
w_{max}=\frac12& \quad &\text{ on } \Gamma, 
\end{eqnarray*}
which defines the minimal and maximal domain
\begin{eqnarray*}
\mathscr{O}_{min}:= \mathscr{O}(-w_{max}), \quad \mathscr{O}_{max}:= \mathscr{O}(w_{max}),
\end{eqnarray*}
where the definition of $\mathscr{O}(w)$ was extended to functions defined on $\Gamma'$ instead of $\Gamma$. Finally, noticing that that since 
$\text{dist}(\Sigma_{in},\partial\Sigma_{left})>0$ and $\text{dist}(\Sigma_{out},\partial\Sigma_{right})>0$, it is immediate that there is a smooth connected domain $\tilde{\mathscr{O}}\subset \mathscr{O}_{min}$ such that:
$$\Sigma_{in}, \Sigma_{out} \subset \partial \tilde{\mathscr{O}}.$$

Now we are ready to introduce the velocity extension:
\begin{lemma}\label{ext:op}
Let $w\in C_0(\Gamma)$ such that $\|w\|_{C(\Gamma)}\leq \frac14$ and $\bu_B\in W^{2-\frac1q,q}(\Sigma_{in}\cup\Sigma_{out})$, $q>3$, satisfying $\eqref{FluidBC:followup}$. Then, for any given $\theta>0$, there exists a constant $C=C(\theta,\mathscr{O}_0)$ that blows up as $\theta\to 0$ and an extension of $\bu_B$ in $W^{1,\infty}(\mathscr{O}_{max})$ also denoted $\bb{u}_B$ such that
\begin{eqnarray*}
	&&\|\bu_B\|_{W^{1,\infty}(\mathscr{O}_{max})}\leq C\| \bu_B\|_{W^{2-\frac1q,q}(\Sigma_{in}\cup \Sigma_{out})},\\[.2cm]
	&&\bb{u}=0 \text{ on } \partial\mathscr{O}_{max}\setminus(\Sigma_{in}\cup\Sigma_{out}) \text{ in the sense of traces}, \\[.2cm]
	&&\divg ~\bu_B\geq 0 \text{ a.e. on } \mathscr{O}_{max},\quad \bu_B = 0 \text{ on }  \mathscr{O}_{max}\setminus \mathscr{O}_{min},  \\[.2cm]
	&&\int_{\mathscr{O}_{max}} |\mathbf{v} \cdot \nabla \mathbf{v} \cdot \bu_B| \leq \theta \| \nabla \mathbf{v}\|_{L^2(\mathscr{O}_{max})}^2 \quad \text{ for any } \quad \mathbf{v} \in H_0^1(\mathscr{O}_{max}).
\end{eqnarray*}
\end{lemma}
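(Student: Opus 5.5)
We will construct the extension in the spirit of the Hopf-type extensions used for stationary compressible inflow/outflow problems (cf. \cite{FN2018,Choe2019}), working entirely on the fixed smooth domain $\tilde{\mathscr{O}}\subset\mathscr{O}_{min}$ and then extending by zero. Since $\tilde{\mathscr{O}}\subset\mathscr{O}_{min}$, an extension supported in $\overline{\tilde{\mathscr{O}}}$ automatically vanishes on $\mathscr{O}_{max}\setminus\mathscr{O}_{min}$. It also has zero trace on $\partial\mathscr{O}_{max}\setminus(\Sigma_{in}\cup\Sigma_{out})$. Moreover, it does not depend on $w$ at all, which is why only $\|w\|_{C(\Gamma)}\le\frac14$ is needed: this guarantees $\mathscr{O}(w)$ lies between $\mathscr{O}_{min}$ and $\mathscr{O}_{max}$.

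\textbf{Step 1: a divergence-adjusted lift.} Using $\bu_B\in W^{2-\frac1q,q}$ with $q>3$, take a standard trace lift $\mathbf{U}\in W^{2,q}(\tilde{\mathscr{O}})\hookrightarrow W^{1,\infty}$ of $\bu_B$ (extended by zero on the rest of $\partial\tilde{\mathscr{O}}$, which is admissible since $\bu_B\in C^2_0$ vanishes near $\partial\Sigma_{in},\partial\Sigma_{out}$). To enforce $\divg\bu_B\ge0$, split the flux. Pick a smooth nonnegative $g$ on $\tilde{\mathscr{O}}$ with $\int g=\int_{\Sigma_{in}\cup\Sigma_{out}}\bu_B\cdot\bb{n}\ge0$, which is possible by the flux hypothesis of Theorem \ref{main}. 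Then correct $\mathbf{U}$ by a Bogovski\u{\i} solution $\mathbf{B}\in W^{2,q}_0(\tilde{\mathscr{O}})$ of $\divg\mathbf{B}=g-\divg\mathbf{U}$. This gives $\divg(\mathbf{U}+\mathbf{B})=g\ge0$ with $W^{1,\infty}$ bounds by $C\|\bu_B\|_{W^{2-1/q,q}}$.

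\textbf{Step 2: the smallness (Hopf) cutoff, the main obstacle.} The $\theta$-estimate forces the field to be concentrated in a thin layer near $\Sigma_{in}\cup\Sigma_{out}$ with size $\sim d/\delta$, where $d$ is the distance to the boundary. Then Hardy's inequality $\|\mathbf{v}/d\|_{L^2}\le C\|\nabla\mathbf{v}\|_{L^2}$ for $\mathbf{v}\in H^1_0$ gives
\[
\int|\mathbf{v}\cdot\nabla\mathbf{v}\cdot\bu_B|\le \|d\,\bu_B\|_{L^\infty(\text{layer})}\|\mathbf{v}/d\|_{L^2}\|\nabla \mathbf{v}\|_{L^2}\le C\delta\|\nabla\mathbf{v}\|^2_{L^2}.
\]
The cleanest way to preserve the divergence condition is to write the flux-free part as a curl. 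The $\bu_B$ data near the inflow/outflow faces are locally normal-flux data on flat faces of the cube. I will subtract a simple explicit field carrying the normal flux, e.g. $\bu_B\cdot\bb{n}$ transported in the normal direction and cut off. The remainder is then divergence-free with zero flux through each component, so it equals $\mathrm{curl}\,\mathbf{A}$ with $\mathbf{A}\in W^{3,q}$. Replace it by $\mathrm{curl}(\chi_\delta\mathbf{A})$, where $\chi_\delta$ is the standard Hopf cutoff with $|\nabla\chi_\delta|\lesssim \delta/d$ and support in $\{d<e^{-1/\delta}\}$. The divergence stays zero, and the boundary values are unchanged. The normal-flux part is handled by choosing it with $\divg\ge0$. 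Concretely, on the inflow, where $\bu_B\cdot\bb{n}<0$, the field $-\phi(x_2)(\bu_B\cdot\bb{n})\bb{e}_y$ with $\phi$ decreasing from $1$ gives divergence $\ge0$. Symmetric reasoning applies at the outflow. Its Hardy contribution is made small by combining it with the global $g$ spread over $\tilde{\mathscr{O}}$, so that it has a small $L^\infty$ norm relative to $\theta$. I expect the delicate point to be reconciling the pointwise sign of the divergence with the $\theta$-smallness of this flux-carrying part, and I would follow the construction in \cite[Section 3]{FN2018} closely there.

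\textbf{Step 3: conclusion.} Sum the pieces, extend by zero to $\mathscr{O}_{max}$, and verify the four properties. The resulting $W^{1,\infty}$ constant depends on $\delta=\delta(\theta)$ and blows up as $\theta\to0$, as claimed.
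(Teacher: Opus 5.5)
Your overall frame matches the paper's: build the field once on the fixed smooth $\tilde{\mathscr O}\subset\mathscr O_{min}$, independently of $w$, and extend it by zero. The paper takes that field directly from \cite[Proposition 3.4]{FN2018}. You are also right to use $\int_{\Sigma_{in}\cup\Sigma_{out}}\bu_B\cdot\bb{n}\ge 0$; the lemma omits it, but $\divg\bu_B\ge0$ and the divergence theorem force it.

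Your attempt to rebuild the cited construction, however, fails at the inflow. There $\bb{n}=-\bb{e}_y$, and for $\mathbf F=-\phi(y)(\bu_B\cdot\bb{n})\bb{e}_y$ with $\phi'\le 0$ you get $\divg\mathbf F=-\phi'(y)\,(\bu_B\cdot\bb{n})\le 0$, the wrong sign. No local choice can repair this. Any field supported near $\Sigma_{in}$ whose only boundary flux is $\int_{\Sigma_{in}}\bu_B\cdot\bb{n}<0$ has negative total divergence, so the inflow must be transported to $\Sigma_{out}$.

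Your suggested remedy does not work either. The flux-carrying part has normal trace $\bu_B\cdot\bb{n}$, so its $L^\infty$ norm cannot be small. Its Hardy smallness must instead come from confinement to a layer of width $\ell\lesssim\theta$. In that layer its divergence has size $|\bu_B\cdot\bb{n}|/\ell$, which a bounded $g$ cannot cancel pointwise. Moreover, when the net flux is positive, a source $g$ spread over the interior (as in Step~1) is itself incompatible with the $\theta$-bound, because the flux it generates must cross interior surfaces.

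A decomposition that works uses the flux hypothesis to put all net flux on the outflow:
\begin{itemize}
\item Pick $h\ge0$ supported in $\Sigma_{out}$ with $\int_{\Sigma_{out}}h=\int_{\Sigma_{in}\cup\Sigma_{out}}\bu_B\cdot\bb{n}$.
\item Extend the zero-flux datum $\bu_B-h\bb{n}$ by a Hopf field $\mathrm{curl}(\chi\mathbf A)$. This field is exactly solenoidal and carries the entire inflow.
\item Extend $h\bb{n}$ by $\phi_\ell(1-y)h\,\bb{e}_y$. Its divergence $-\phi_\ell'(1-y)h$ is nonnegative, and by Hardy's inequality in $y$ it contributes at most $2\ell\|h\|_\infty\|\nabla\mathbf v\|_{L^2}^2$.
\end{itemize}

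Second, the zero extension in Step~3 does not automatically carry over the last estimate; the paper also dispatches this step in one line. For $\mathbf v\in H^1_0(\mathscr O_{max})$, the restriction $\mathbf v|_{\tilde{\mathscr O}}$ need not vanish on $\partial\tilde{\mathscr O}\cap\mathscr O_{max}$. So neither the $H^1_0(\tilde{\mathscr O})$ estimate nor Hardy's inequality relative to $\partial\tilde{\mathscr O}$ applies there, and this part of $\partial\tilde{\mathscr O}$ cannot be avoided. The reason is as follows:
\begin{itemize}
\item A smooth $\tilde{\mathscr O}$ cannot touch the edges of the cube.
\item Since $\divg\bu_B\ge0$, the inflow flux must cross every surface in $\tilde{\mathscr O}$ separating $\Sigma_{in}$ from $\Sigma_{out}$.
\item Hence $\bu_B$ must carry flux of size $|\int_{\Sigma_{in}}\bu_B\cdot\bb{n}|$ through a region at some fixed distance $\epsilon_0>0$ from $\partial\mathscr O_{max}$.
\item Testing the $\theta$-bound with $\mathbf v=\eta\,(x_k-a_k+r)\bb{e}_k$, where $\eta=1$ on $B_{r/4}(a)$ and $\mathrm{supp}\,\eta\subset B_{r/2}(a)\subset\mathscr O_{max}$, gives $\int_{B_{r/4}(a)}|\bu_B|\lesssim\theta r^2$.
\item With $r\sim\epsilon_0$, this is incompatible with carrying that flux once $\theta$ is small compared with $\epsilon_0$ times the inflow flux, up to geometric constants of $\tilde{\mathscr O}$.
\end{itemize}
So for small $\theta$ the boundary layer has to be built relative to $\partial\mathscr O_{max}$ itself, following the walls around the Lipschitz edges, rather than inherited from $\tilde{\mathscr O}$.
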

\begin{proof}
Since $\tilde{\mathscr{O}}$ is of $C^3$ regularity, by \cite[Proposition 3.4]{FN2018}, there exists the extension to $\tilde{\mathscr{O}}$ such that
\begin{eqnarray*}
	&& \|\bu_B\|_{W^{1,\infty}(\tilde{\mathscr{O}})} \leq C\|\bu_B\|_{W^{2,q}(\tilde{\mathscr{O}})}\leq C(\theta,\tilde{\mathscr{O}})\| \bu_B\|_{W^{2-\frac1q,q}(\partial \tilde{\mathscr{O}})},\\[.2cm]
	&&\divg ~\bu_B\geq 0 \text{ a.e. on } \tilde{\mathscr{O}}, \qquad \bu_B = 0 \text{ on }  \partial\tilde{\mathscr{O}}\setminus(\Sigma_{in}\cup\Sigma_{out}) \text{ in the sense of traces},\\[.2cm]
	&& \int_{\tilde{\mathscr{O}}} |\mathbf{v} \cdot \nabla \mathbf{v} \cdot \bu_B| \leq \theta \| \nabla \mathbf{v}\|^2_{L^2(\tilde{\mathscr{O}})} \quad \text{ for any } \quad \mathbf{v} \in H_0^1(\tilde{\mathscr{O}}).
\end{eqnarray*}
Since this extension vanishes on $\partial\tilde{\mathscr{O}}\setminus(\Sigma_{in}\cup\Sigma_{out})$, it can be extended by zero to $\mathscr{O}_{max}\setminus\tilde{\mathscr{O}}$, which clearly satisfies the same conditions; as such, the proof is finished.
\end{proof}

\begin{lemma}\label{extension}
Let $w\in C^{0,1}(\Gamma)\cap C_0(\Gamma)$ such that $\|w\|_{ C(\Gamma)}\leq \frac14$. Then, there exists an extension operator $A:W_0^{1-\frac1p,p}(\Gamma) \to W^{1,p}(\mathscr{O}(w))$ for $p>2$ or $A:W^{1-\frac1p,p}(\Gamma) \to W^{1,p}(\mathscr{O}(w))$ for $1<p\leq 2$ such that
\begin{eqnarray*}
	\gamma_{|\Gamma^w}A[f] &=& f, \\
	\|A[f]\|_{W^{1,p}(\mathscr{O}(w))} &\leq& C \|f\|_{W^{1-\frac1p,p}(\Gamma)},
\end{eqnarray*}
where $C$ only depends on $\|w\|_{ C^{0,1}(\Gamma)}$, and $A[f]=0$ on $\partial\mathscr{O}(w)\setminus \Gamma^w$ in the sense of traces.
\end{lemma}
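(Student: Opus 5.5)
Since $\|w\|_{C(\Gamma)}\leq \frac14$, the fluid domain near $\Gamma^w$ contains a uniform slab. I will therefore build $A[f]$ from three pieces: a standard extension on a flat reference configuration, a Lipschitz change of variables flattening $\Gamma^w$, and a cut-off in the vertical variable. Concretely, let $\chi\in C^\infty(\mathbb{R})$ with $\chi=1$ on $[-\frac14,\frac14]$ and $\chi=0$ outside $(-\frac12,\frac12)$. Let $\eta$ be a smooth cut-off in the horizontal variables, equal to $1$ on $\Gamma$ and supported in $\Gamma'$; it is only needed when $f$ does not vanish on $\partial\Gamma$. Introduce the Lipschitz map
\[
\Psi_w(x,y,s):=(x,y,s+w(x,y)),
\]
which is bi-Lipschitz with constants depending only on $\|w\|_{C^{0,1}(\Gamma)}$. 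It sends the half-slab $\Gamma\times(0,\frac14)$ onto the region just above $\Gamma^w$ inside $\mathscr{O}(w)$. Here $w$ is extended by zero outside $\Gamma$, which preserves the Lipschitz norm because $w\in C_0(\Gamma)$.

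For $p>2$ and $f\in W_0^{1-\frac1p,p}(\Gamma)$, the zero extension $\tilde f$ of $f$ to $\mathbb{R}^2$ lies in $W^{1-\frac1p,p}(\mathbb{R}^2)$ with comparable norm. Let $E\tilde f\in W^{1,p}(\mathbb{R}^2\times(0,\infty))$ be a standard right inverse of the trace, for instance $E\tilde f(x,y,s)=\chi(s)\,(\phi_s*\tilde f)(x,y)$ with a mollifier $\phi_s$ at scale $s$. The support of $E\tilde f$ then lies inside the $s$-neighbourhood of $\Gamma$. To handle this, I replace $E\tilde f$ by $\zeta\,E\tilde f$ with $\zeta$ a horizontal cut-off. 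To keep zero traces on $\Sigma_{bot}\setminus\Gamma$, I then set
\[
A[f]:=(E\tilde f)\circ\Psi_w^{-1}
\]
on $\{(x,y,z):(x,y)\in\Gamma,\ w<z<w+\tfrac14\}$ and $A[f]:=0$ elsewhere.

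For this to lie in $W^{1,p}(\mathscr{O}(w))$, the function $E\tilde f$ must have zero trace on the lateral boundary $\partial\Gamma\times(0,\frac14)$. That is precisely what $f\in W_0^{1-\frac1p,p}$ provides, once the extension is performed in a way that respects the vanishing. Rather than mollifying, the cleanest route is to first flatten $\Gamma$ locally, use the half-space extension of $\tilde f\in W_0^{1-\frac1p,p}$, and multiply by a cut-off $\theta(x,y,s)$ vanishing near $\partial\Gamma\times\{s>0\}$. For $p>2$ this requires a Hardy-type inequality: $\operatorname{dist}(\cdot,\partial\Gamma)^{-(1-\frac1p)}f\in L^p$ holds since $1-\frac1p>\frac1p$. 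For $1<p\le2$ the statement only asks for $f\in W^{1-\frac1p,p}(\Gamma)$. In that range, zero extension to $\Sigma_{bot}$ is still bounded (when $1-\frac1p<\frac1p$, and with a standard argument at the borderline case), and the resulting $A[f]$ still vanishes on $\partial\mathscr{O}(w)\setminus\Gamma^w$.

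The remaining steps are routine:
\begin{enumerate}
\item The chain rule for a bi-Lipschitz map gives $\|A[f]\|_{W^{1,p}}\le C(\|w\|_{C^{0,1}})\|E\tilde f\|_{W^{1,p}}\le C\|f\|_{W^{1-\frac1p,p}(\Gamma)}$.
\item The Lagrangian trace satisfies $\gamma_{|\Gamma^w}A[f](X)=E\tilde f(X,0)=f(X)$.
\item $A[f]$ vanishes on the top and lateral boundary of the cube, since $\chi(s)=0$ for $s\ge\frac12$ and the slab reaches at most $z\le\frac14+\frac14=\frac12<1$. It also vanishes on the flat part of $\Sigma_{bot}$ by construction.
\end{enumerate}

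The main obstacle is the corner where $\Gamma^w$ meets the flat bottom along $\partial\Gamma$. The map $\Psi_w$ is only Lipschitz there, and gluing the extension by zero across the lateral surface $\partial\Gamma\times(0,\frac14)$ demands zero trace on it. I expect this step to be exactly where the dichotomy between $p>2$ with $W_0^{1-\frac1p,p}$ and $p\le 2$ enters. I would treat it by a localized Hardy inequality near the Lipschitz curve $\partial\Gamma$.
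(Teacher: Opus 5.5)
Your route is genuinely different from the paper's. The paper sets $A[f]=r\,\mathbf{e}_z$, where $r$ is harmonic in $D_w=\{(x,y,z):(x,y)\in\Gamma,\ w(x,y)<z<\frac12\}$. It has $r=f$ on $\Gamma^w$ and $r=0$ on the lateral wall $\partial\Gamma\times(0,\frac12)$ and on the lid, and is then extended by zero. Like your truncated construction, this confines $A[f]$ to the cylinder over $\Gamma$ and makes it vanish on the lateral wall, but the lift comes from a Dirichlet problem. The $W_0$ versus $W$ dichotomy is hidden in the requirement that the glued datum ($f$ on $\Gamma^w$, $0$ on the rest of $\partial D_w$) lie in $W^{1-\frac1p,p}(\partial D_w)$. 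That is the same Hardy condition $\int_\Gamma|f|^p\,\mathrm{dist}(\cdot,\partial\Gamma)^{1-p}<\infty$ that you isolate. The paper's argument is one line, but it tacitly uses $W^{1,p}$ bounds for the harmonic Dirichlet problem on a merely Lipschitz domain, with constants depending only on the Lipschitz character. Such bounds hold only for $p$ in a bounded interval around $2$ (Jerison--Kenig); this covers the values $p=2,3$ used later in the paper. Your bi-Lipschitz flattening plus explicit half-space lifting is more elementary and works for every $p\in(1,\infty)$, $p\neq2$. It also makes the dependence on $\|w\|_{C^{0,1}(\Gamma)}$ transparent. Your edge cut-off is viable: take $\theta=\vartheta(\mathrm{dist}(\cdot,\partial\Gamma)/s)$ with $\vartheta=0$ on $[0,1]$ and $\vartheta=1$ on $[2,\infty)$. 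Then $|\nabla\theta|\lesssim s^{-1}$, and Jensen plus Fubini give $\|E\tilde f\,\nabla\theta\|_{L^p}^p\lesssim\int_\Gamma|f|^p\,\mathrm{dist}(\cdot,\partial\Gamma)^{1-p}$.

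Three points need fixing.

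\textbf{The truncation is unnecessary.} The horizontal spreading of $E\tilde f$ for $s>0$ does not affect the trace on $\Sigma_{bot}\setminus\overline{\Gamma}$. That trace is $\tilde f=0$, because $\Psi_w$ is the identity there. Moreover, $\partial\Gamma\times(0,1)$ lies inside the fluid domain (the region above the graph of $w$ extended by zero), not on its boundary. Hence $A[f]:=(\zeta\chi\,E\tilde f)\circ\Psi_w^{-1}$ on all of $\mathscr{O}(w)$, with $\zeta\in C_c^\infty(\Gamma')$ and $\zeta=1$ on $\Gamma$, already has every required property. The dichotomy then enters only through the boundedness of $f\mapsto\tilde f$, which you had already invoked, and your ``main obstacle'' disappears.

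\textbf{The cut height is wrong.} If you keep the truncation, note that $\chi\equiv1$ for $0<s<\frac14$. Cutting at $z=w+\frac14$ therefore produces a jump across that surface; cut at $s=\frac12$ instead, so that $z\le\frac34<1$.

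\textbf{There is no ``standard argument at the borderline case''.} Zero extension is unbounded from $H^{1/2}(\Gamma)$ into $H^{1/2}(\mathbb{R}^2)$, and the lemma is in fact false for $p=2$ as stated. For $f\equiv1$, the trace of any admissible $A[f]\in H^1(\mathscr{O}(w))$ on the Lipschitz surface $\Gamma^w\cup(\Sigma_{bot}\setminus\overline{\Gamma})$ would pull back to the indicator function of $\Gamma$, which is not in $H^{1/2}_{\mathrm{loc}}$. At $p=2$ one must take $f\in H^{1/2}_{00}(\Gamma)$. The paper applies the lemma at $p=2$ only to $f=w\in H^2_0(\Gamma)$ and $f=\psi\in C_c^\infty(\Gamma)$, which lie in that space. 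The paper's own proof has the same blind spot, since its glued Dirichlet datum is then not in $H^{1/2}(\partial D_w)$.
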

\begin{proof}
Operator $A$ can be defined as $A[f]=r \bb{e}_z$ where $r$ is the solution to
\begin{eqnarray*}
	\begin{cases}
		\Delta r=0, & \text{ in } \left\{ (x,y,z): (x,y)\in \Gamma, w(x,y)<z<\frac12 \right\},\\
		r = f, & \text{ on } \Gamma^w,\\
		r = 0,  & \text{ on } \left(0,\frac12\right)\times\partial\Gamma\cup \left\{\frac12\right\}\times \Gamma,
	\end{cases}
\end{eqnarray*}
and then extended by zero to $\mathscr{O}(w)$. 
\end{proof}

\begin{lemma}\label{conv:lemma}
Let $w_n \to w$ in $C_0^{0,1}(\overline{\Gamma})$ such that $\|w_n\|_{C(\Gamma)}\leq \frac14$ and let $\|f_n\|_{W^{1,p}(\mathscr{O}(w_n))}\leq C$ for $p>\frac65$. Then, there exists $f\in L^2(\mathscr{O}(w))$ such that $f_n \chi_{|\mathscr{O}(w_n)}\to f\chi_{|\mathscr{O}(w)}$ in $L^2(\mathbb{R}^3)$, at least for a subsequence.
\end{lemma}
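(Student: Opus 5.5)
Since the domains $\mathscr{O}(w_n)$ move, I would transport the functions $f_n$ to a fixed reference domain, extract a convergent subsequence there by Rellich--Kondrachov, and then control the mismatch between $\mathscr{O}(w_n)$ and $\mathscr{O}(w)$ through the uniform Lipschitz bound on $w_n$.

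\textbf{Transport and extension.} I would use the flattening map $\Phi_n:\mathscr{O}(0)\to\mathscr{O}(w_n)$ given by
$$\Phi_n(x,y,z)=\big(x,y,\,z+(1-z)\,w_n(x,y)\big)$$
on the cylinder over $\Gamma$, and the identity elsewhere. Since $w_n=0$ on $\partial\Gamma$, $\Phi_n$ is a bi-Lipschitz homeomorphism. Its Jacobian is $1-w_n\geq \frac34$, and both $\Phi_n$ and $\Phi_n^{-1}$ have Lipschitz norms bounded uniformly in $n$, because $w_n\to w$ in $C^{0,1}$ makes $\|w_n\|_{C^{0,1}(\Gamma)}$ bounded. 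Also $\Phi_n\to\Phi$ uniformly, where $\Phi$ is built the same way from $w$.

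Set $g_n:=f_n\circ\Phi_n$. Then $\|g_n\|_{W^{1,p}(\mathscr{O}(0))}\leq C$. The reference domain $\mathscr{O}(0)$ is the unit cube, which is Lipschitz, and $p>\frac65$ gives the exponent relation $p^*=\frac{3p}{3-p}>2$. By Rellich--Kondrachov, a subsequence satisfies $g_n\to g$ strongly in $L^2(\mathscr{O}(0))$ (indeed in $L^r$ for every $r<p^*$, in particular for some $r>2$). I then define $f:=g\circ\Phi^{-1}\in L^2(\mathscr{O}(w))$.

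\textbf{Splitting the error.} On $\mathbb{R}^3$ I would write
$$f_n\chi_{\mathscr{O}(w_n)}-f\chi_{\mathscr{O}(w)}=\big(g_n\circ\Phi_n^{-1}-g\circ\Phi_n^{-1}\big)\chi_{\mathscr{O}(w_n)}+\big(g\circ\Phi_n^{-1}\chi_{\mathscr{O}(w_n)}-g\circ\Phi^{-1}\chi_{\mathscr{O}(w)}\big).$$

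The first term, after changing variables, has $L^2$ norm bounded by $C\|g_n-g\|_{L^2(\mathscr{O}(0))}$, because the Jacobian $1-w_n$ is bounded above and below. Hence it tends to zero.

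For the second term I would change variables back to $\mathscr{O}(0)$; the weight $(1-w_n)/(1-w)$ then converges uniformly to $1$. The remaining difference is handled by continuity of composition in $L^2$: approximate $g$ in $L^2$ by a function $\tilde g\in C_c(\mathbb{R}^3)$, and use the uniform convergence $\Phi_n^{-1}\to\Phi^{-1}$ together with the uniformly bounded Jacobians. Equivalently, one can note that $\mathscr{O}(w_n)\,\triangle\,\mathscr{O}(w)$ has measure at most $\|w_n-w\|_{L^1(\Gamma)}\to0$. Combined with the uniform $L^r$ bound for some $r>2$ and Hölder's inequality, this makes the contribution from the symmetric difference vanish.

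\textbf{Main obstacle.} The delicate point is this second term: comparing compositions with different maps. It requires equi-integrability, which is exactly where $p>\frac65$ is used, giving integrability strictly above $L^2$. Uniform convergence of the $w_n$ (rather than mere weak convergence) is what makes the symmetric-difference and composition estimates close.
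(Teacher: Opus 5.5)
Your proof is correct, but it takes a different route from the paper.

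\textbf{The paper's route.} The paper does not transport anything. It extends $f_n$ by zero and uses the uniform Lipschitz character of $\mathscr{O}(w_n)$ only to get $\|f_n\|_{L^q(\mathbb{R}^3)}\le C$ with $q=\frac{3p}{3-p}>2$. It then applies Rellich on a fixed compact $K\Subset\mathscr{O}(w)$, which lies in $\mathscr{O}(w_n)$ for large $n$ by uniform convergence. The thin layers $\mathscr{O}(w_n)\setminus K$ and $\mathscr{O}(w)\setminus K$ are controlled by H\"older with exponent $\frac{2q}{q-2}$, and finally $|\mathscr{O}(w)\setminus K|\to 0$. That argument is essentially coordinate-free. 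However, it needs $q>2$ for the boundary layer, and it needs a diagonal extraction over an exhausting sequence of compacts to define $f$ consistently, a step the paper leaves implicit.

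\textbf{Your route.} Your flattening maps $\Phi_n$ exploit the graph structure instead. They give $f=g\circ\Phi^{-1}$ directly (even in $W^{1,p}(\mathscr{O}(w))$) and avoid the exhaustion step. They also use $p>\frac65$ only once, for the compact embedding $W^{1,p}(\mathscr{O}(0))\hookrightarrow\hookrightarrow L^2(\mathscr{O}(0))$.

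\textbf{Two corrections.} First, your ``main obstacle'' paragraph misplaces where the hypothesis is used. The composition term $g\circ\Phi_n^{-1}\chi_{\mathscr{O}(w_n)}-g\circ\Phi^{-1}\chi_{\mathscr{O}(w)}$ involves one fixed $g\in L^2$, and the density argument needs no integrability beyond $L^2$:
\begin{itemize}
\item approximate $g$ by $\tilde g\in C_c(\mathscr{O}(0))$;
\item control the error with the Jacobian bounds $\frac34\le 1-w_n\le\frac54$;
\item use uniform convergence of $\tilde g\circ\Phi_n^{-1}$.
\end{itemize}
The last step is cleanest after extending $w_n$ by zero to $(0,1)^2$, so that $\Phi_n^{-1}$ is a global bi-Lipschitz map converging uniformly.

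Second, the symmetric-difference remark is not an equivalent alternative. The identity $|\mathscr{O}(w_n)\triangle\mathscr{O}(w)|=\|w_n-w\|_{L^1(\Gamma)}$ combined with H\"older says nothing about comparing $g\circ\Phi_n^{-1}$ with $g\circ\Phi^{-1}$ on $\mathscr{O}(w_n)\cap\mathscr{O}(w)$. Rely on the density argument, which already suffices, and drop that remark.
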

\begin{proof}
First, let $f_n,f$ be extended by zero to $\mathbb{R}^3$ which by imbedding satisfy $\|f_n\|_{L^q(\mathbb{R}^3)},\|f\|_{L^q(\mathbb{R}^3)} \leq C$, for some $\frac{3p}{3-p}=q>2$, and let $r=\frac2{1-2/q}$. Now, fix $K\Subset \mathscr{O}(w)$ such that $|\mathscr{O}(w) \setminus K|<(\varepsilon/2)^{\frac{r}2}$ and let $n_K$ be such that for all $n\geq n_K$ one has $K\Subset \mathscr{O}(w_n)$ and $|\mathscr{O}(w_n) \setminus K|<\varepsilon^{\frac{r}2}$. Note that since $\|f_n\|_{W^{1,p}(K)}\leq C$, we can fix a subsequence (not relabeled) such that $\int_K |f_n - f|^2 \to 0$ as $n\to \infty$. Therefore, for any $n\geq n_K$ one has
\begin{eqnarray*}
	&&\int_{\mathbb{R}^3} |f_n \chi_{|\mathscr{O}(w_n)}- f\chi_{\mathscr{O}(w)}|^2 \\
	&&\leq 2 \int_{\mathbb{R}^3} \chi_{|K}|f_n - f|^2+ 2 \int_{\mathbb{R}^3} |f_n|^2 | \chi_{|\mathscr{O}(w_n)}-\chi_{|K}|^2 + 2 \int_{\mathbb{R}^3} |f|^2| \chi_{|K}- \chi_{|\mathscr{O}(w)}|^2 \\
	&&\leq 2  \int_{K} |f_n - f|^2 +2\|f_n\|_{L^q(\mathbb{R}^3)}^2 \left(\int_{\mathbb{R}^3} |\chi_{|K}- \chi_{\mathscr{O}(w_n)}|^{r} \right)^{\frac2r}+2\|f\|_{L^q(\mathbb{R}^3)}^2 \left(\int_{\mathbb{R}^3} |\chi_{|K}- \chi_{\mathscr{O}(w)}|^{r} \right)^{\frac2r}\\
	&&\leq 2  \int_{K} |f_n - f|^2 + 3\varepsilon C 
\end{eqnarray*}
so
\begin{eqnarray*}
	\lim_{n\to \infty}\int_{\mathbb{R}^3} |f_n \chi_{|\mathscr{O}(w_n)}- f\chi_{\mathscr{O}(w)}|^2 \leq 3\varepsilon C.
\end{eqnarray*}
Letting $|\mathscr{O}(w)\setminus K| \to 0$ gives us the desired conclusion.    
\end{proof}

\subsection{The approximate problem}\label{sec:reg}
The approximate problem is defined as follows. Let $\varepsilon,\delta \in (0,1)$. There are two levels of approximation, as in \cite{FN2018}:
\begin{enumerate}
\item $\varepsilon$-level: approximate pressure and approximate domain;
\item $\delta$-level: artificial density dissipation.
\end{enumerate}

First, we regularize $\rho_B$ by $\rho_B^\delta\in C^1(\Sigma_{in})$ such that $\rho_B^\delta \to \rho_B$ in $C(\Sigma_{in})$ as $\delta\to 0$ and $\rho_B^\delta<\overline{\rho}$ for all $\delta$. Next, let us introduce the cut-off
\begin{eqnarray*}
T(\rho):= \begin{cases}
	0, & \text{ for } \rho \leq 0,\\
	\rho,& \text{ for } 0\leq \rho \leq \overline{\rho},\\
	\overline{\rho},& \text{ for } \rho \geq \overline{\rho},\\
\end{cases}
\end{eqnarray*}
and then the approximate pressure
\begin{eqnarray*}
p_{\varepsilon,\delta}(\rho):=p_\varepsilon(\rho)+\sqrt{\delta}\rho,
\end{eqnarray*}
where
\begin{eqnarray*}
p_\varepsilon(\rho):=\begin{cases}
	p(\rho) ,& \text{ if } 0\leq \rho \leq \overline{\rho}-\varepsilon,\\
	p(\overline{\rho}-\varepsilon)+ p'(\overline{\rho}-\varepsilon)(\rho - \overline{\rho}+\varepsilon ),& \text{ if }\rho > \overline{\rho}-\varepsilon.
\end{cases}
\end{eqnarray*}
Finally, since the proof will rely on a sufficient regularity of the fluid domain, we introduce a family of smooth connected convex approximate domains $\mathscr{O}_\varepsilon\subset \mathscr{O}_0$ which are uniformly Lipschitz such that:
\begin{eqnarray*}
\Sigma_{in}, \Sigma_{out}, \Gamma' \subset \partial \mathscr{O}_\varepsilon,\qquad \Gamma'\times(0,1)\subset \mathscr{O}_\varepsilon, \qquad |\mathscr{O}_0\setminus\mathscr{O}_\varepsilon|\to 0 \text{ as } \varepsilon\to 0, \qquad    \tilde{\mathscr{O}} \subset \mathscr{O}_\varepsilon \text{ for all } \varepsilon \in (0,1),
\end{eqnarray*}
where $\tilde{\mathscr{O}}$ was introduced at the beginning of Section \ref{prel:sec}, and the approximate domain 
$$\mathscr{O}_\varepsilon(w) := \mathscr{O}_\varepsilon \cap \mathscr{O}(w).$$
Next, from here onward, we fix the extension $\bu_B\in W^{1,\infty}(\mathscr{O}_{max})$ constructed in Lemma $\ref{ext:op}$ corresponding to a fixed small $\theta>0$ for which the estimate $\eqref{u:est}$ holds, and the space:
\begin{eqnarray*}
H_{B}^1( \mathscr{O}_\varepsilon([w]_{cor})):= \{\bu\in H^1( \mathscr{O}_\varepsilon([w]_{cor})): \bu - \bu_B \in H_0^1( \mathscr{O}_\varepsilon([w]_{cor}))\}.
\end{eqnarray*}
Finally, since we will work with elliptic regularity, our space for $w$ needs to ensure that the fluid domain is of $C^{1,1}$ regularity. Therefore, since $H^{s}(\Gamma)$ is imbedded into $C^{1,1}(\overline{\Gamma})$ for any $s>3$, we fix $a\in (0,1/2)$ and the space target space as $H_0^2(\Gamma)\cap H^{3+a}(\Gamma)$. We are ready to define:

\begin{definition}\label{app:sol}
We say that $(\rho,\bu,w)\in H^1( \mathscr{O}_\varepsilon([w]_{cor}))\times H_B^1( \mathscr{O}_\varepsilon([w]_{cor}))\times \big[H_0^2(\Gamma)\cap H^{3+a}(\Gamma)\big]$ is an approximate solution if:
\begin{enumerate}
	\item Damped continuity equation
	\begin{eqnarray*}
		&&\delta\int_{ \mathscr{O}_\varepsilon([w]_{cor})} \nabla\rho \cdot \nabla \varphi +  \delta\int_{ \mathscr{O}_\varepsilon([w]_{cor})} \rho \varphi  - \int_{ \mathscr{O}_\varepsilon([w]_{cor})} T(\rho)\bb{u} \cdot \nabla \varphi \\
		&&= -\int_{\Sigma_{in}} \rho_B^\delta\bu_B\cdot \bb{n} \varphi - \int_{\Sigma_{out}} T(\rho)\bu_B\cdot \bb{n} \varphi 
	\end{eqnarray*}
	holds for all $\varphi \in C^\infty(\overline{ \mathscr{O}_\varepsilon([w]_{cor})})$;
	\item The coupled momentum equation
	\begin{eqnarray*}
		&&\int_{ \mathscr{O}_\varepsilon([w]_{cor})} (T(\rho) \bu\otimes \bu): \nabla\boldsymbol{\varphi} + \int_{ \mathscr{O}_\varepsilon([w]_{cor})} p_{\varepsilon,\delta}(\rho) \divg  \boldsymbol{\varphi}- \int_{ \mathscr{O}_\varepsilon([w]_{cor})}\mathbb{S}(\nabla \bu):\nabla\boldsymbol{\varphi} \\
		&&- \delta  \int_{ \mathscr{O}_\varepsilon([w]_{cor})} \nabla(\rho \bu)\cdot \nabla\boldsymbol{\varphi} - \delta  \int_{ \mathscr{O}_\varepsilon([w]_{cor})} \rho \bu\cdot\boldsymbol{\varphi}- \int_\Gamma \kappa\Delta w \Delta \psi = 0
	\end{eqnarray*}
	holds for every $\boldsymbol\varphi\in C_c^\infty(\mathscr{O}_\varepsilon([w]_{cor}) \cup \Gamma^{[w]_{cor}})$ and $\psi \in C_c^\infty(\Gamma)$ such that $\boldsymbol{\varphi}_{|\Gamma^{[w]_{cor}}}  = \psi \bb{e}_z$.
\end{enumerate}
\end{definition}

\subsection{Solving the approximate problem}\label{sec:fixed}
\textit{Convention on notation}. From here onward, for any constant appearing in an estimate, the dependence on $\Gamma,\overline{\rho}, \mathscr{O}_{min},\mathscr{O}_{max}$ will not be mentioned since they are all fixed. On the other hand, the dependence on $\|\bb{u}_B\|_{W^{2-\frac2q,q}(\Sigma_{in}\cup\Sigma_{out})}$ or any lower order norm of $\bb{u}_B$ will be emphasized if the estimate is uniform, otherwise it will simply be denoted as $C=C(\bb{u}_B)$ (for example if it depends on $\delta^{-1}$). \\

In order to solve the problem by a fixed point, we need to fix a space. Since the fluid velocity space depends on domain displacement, let us define the desired space for which the fluid domain is fixed
\begin{eqnarray*}
V:= \big\{(\bb{u}, w)\in H_B^1(\mathscr{O}_{max})\times \big[H_0^2(\Gamma)\cap H^{3+a}(\Gamma)\big]: \bb{u} = 0 \text{ a.e. on } \mathscr{O}_{max}\setminus  \mathscr{O}_\varepsilon([w]_{cor})  \big\},
\end{eqnarray*}
and the operator
\begin{eqnarray*}
\mathcal{T}:&&  H_B^1(\mathscr{O}_{max})\times \big[H_0^2(\Gamma)\cap H^{3+a}(\Gamma)\big] \to  H_B^1(\mathscr{O}_{max})\times \big[H_0^2(\Gamma)\cap H^{3+a}(\Gamma)\big]\\
&& (\tilde\bu, \tilde{w}) \mapsto (\bu, w)
\end{eqnarray*}
where $w = w[\rho, \bb{u},\tilde{\bu},\tilde{w}]\in H_0^2(\Gamma)\cap H^{3+a}(\Gamma)$ is the solution to 
\begin{eqnarray}
&& \int_\Gamma \kappa\Delta w \Delta \psi\nonumber\\
&&=\int_{\mathscr{O}_\varepsilon([\tilde{w}]_{cor})} (T(\rho) \tilde{\bu}\otimes \tilde{\bu}): \nabla\boldsymbol{\varphi} + \int_{\mathscr{O}_\varepsilon([\tilde{w}]_{cor})} p_{\varepsilon,\delta}(\rho) \divg  \boldsymbol{\varphi}-\delta\int_{\mathscr{O}_\varepsilon([\tilde{w}]_{cor})}  \tilde{\bu}\otimes\nabla\rho: \nabla\boldsymbol{\varphi} \nonumber \\
&& -\int_{\mathscr{O}_\varepsilon([\tilde{w}]_{cor})}\mathbb{S}(\nabla \bu):\nabla\boldsymbol{\varphi} - \delta  \int_{\mathscr{O}_\varepsilon([\tilde{w}]_{cor})} \rho \nabla\bu:\nabla\boldsymbol{\varphi} - \delta  \int_{\mathscr{O}_\varepsilon([\tilde{w}]_{cor})} \rho \bu\cdot\boldsymbol{\varphi} \label{app:plate}
\end{eqnarray}
that holds for $\boldsymbol\varphi\in C_c^\infty(\mathscr{O}_\varepsilon([\tilde{w}]_{cor}) \cup \Gamma^{[\tilde{w}]_{cor}})$ and $\psi \in C_c^\infty(\Gamma)$ such that $\boldsymbol{\varphi}_{|\Gamma^{[\tilde{w}]_{cor}}}  = \psi \bb{e}_z$, then $\bb{u} = \bb{u}[\rho,\tilde{\bb{u}},\tilde{w}]\in H_B^1(\mathscr{O}_{\varepsilon}[\tilde w]_{cor})$ extended by zero to $\mathscr{O}_{max}\setminus  \mathscr{O}_\varepsilon([\tilde{w}]_{cor})$ is the solution to
\begin{eqnarray}
&&\int_{\mathscr{O}_\varepsilon([\tilde{w}]_{cor})}\mathbb{S}(\nabla \bu):\nabla\boldsymbol{\varphi}+\delta  \int_{\mathscr{O}_\varepsilon([\tilde{w}]_{cor})} \rho\nabla \bu\cdot \nabla\boldsymbol{\varphi} + \delta  \int_{\mathscr{O}_\varepsilon([\tilde{w}]_{cor})} \rho \bu\cdot\boldsymbol{\varphi} \nonumber \\
&&=-\delta\int_{\mathscr{O}_\varepsilon([\tilde{w}]_{cor})}  \tilde{\bu}\otimes\nabla\rho: \nabla\boldsymbol{\varphi} +\int_{\mathscr{O}_\varepsilon([\tilde{w}]_{cor})} (T(\rho) \tilde{\bu}\otimes \tilde{\bu}): \nabla\boldsymbol{\varphi} + \int_{\mathscr{O}_\varepsilon([\tilde{w}]_{cor})} p_{\varepsilon,\delta}(\rho) \divg  \boldsymbol{\varphi}  \label{app:fixed:mom}
\end{eqnarray}
that holds for every $\boldsymbol\varphi\in C_c^\infty(\mathscr{O}_\varepsilon([\tilde{w}]_{cor}))$, and finally $\rho = \rho[\tilde{\bb{u}},\tilde{w}] \in H^2(\mathscr{O}_\varepsilon([\tilde{w}]_{cor})$ is the solution to 
\begin{eqnarray}
&&\delta\int_{\mathscr{O}_\varepsilon([\tilde{w}]_{cor})} \nabla\rho \cdot \nabla \varphi +  \delta\int_{\mathscr{O}_\varepsilon([\tilde{w}]_{cor})} \rho \varphi  - \int_{\mathscr{O}_\varepsilon([\tilde{w}]_{cor})} T(\rho)\tilde{\bb{u}} \cdot \nabla \varphi \nonumber \\
&&= -\int_{\Sigma_{in}} \rho_B^\delta\bu_B\cdot \bb{n} \varphi - \int_{\Sigma_{out}} T(\rho)\bu_B\cdot \bb{n} \varphi \label{app:fixed:cont}
\end{eqnarray}
holds for all $\varphi \in C^\infty(\overline{\mathscr{O}_\varepsilon([\tilde{w}]_{cor})})$.\\

\noindent
\textbf{Well-definedness of operator}: The existence of unique solution $\rho\geq 0$ to the problem $\eqref{app:fixed:cont}$ follows by method of monotone operators and its uniqueness by \cite[Lemma 4.1]{FN2018}. Next, in order to solve $\eqref{app:fixed:mom}$, one can rewrite it as
\begin{eqnarray*}
&&\int_{\mathscr{O}_\varepsilon([\tilde{w}]_{cor})}\mathbb{S}(\nabla \bu - \nabla \bu_B):\nabla\boldsymbol{\varphi}+\delta  \int_{\mathscr{O}_\varepsilon([\tilde{w}]_{cor})} \rho(\nabla \bu-\nabla \bu_B)\cdot \nabla\boldsymbol{\varphi} + \delta  \int_{\mathscr{O}_\varepsilon([\tilde{w}]_{cor})} \rho (\bu-\bu_B)\cdot\boldsymbol{\varphi} \nonumber \\
&&=\int_{\mathscr{O}_\varepsilon([\tilde{w}]_{cor})} (T(\rho) \tilde{\bu}\otimes \tilde{\bu}): \nabla\boldsymbol{\varphi} + \int_{\mathscr{O}_\varepsilon([\tilde{w}]_{cor})} p_{\varepsilon,\delta}(\rho) \divg  \boldsymbol{\varphi} -\delta\int_{\mathscr{O}_\varepsilon([\tilde{w}]_{cor})}  \tilde{\bu}\otimes\nabla\rho: \nabla\boldsymbol{\varphi} \\
&& \quad - \int_{\mathscr{O}_\varepsilon([\tilde{w}]_{cor})}\mathbb{S}( \nabla \bu_B):\nabla\boldsymbol{\varphi}-\delta  \int_{\mathscr{O}_\varepsilon([\tilde{w}]_{cor})} \rho\nabla \bu_B: \nabla\boldsymbol{\varphi} - \delta  \int_{\mathscr{O}_\varepsilon([\tilde{w}]_{cor})} \rho\bu_B\cdot\boldsymbol{\varphi}
\end{eqnarray*}
for all $\boldsymbol\varphi\in C_c^\infty(\mathscr{O}_\varepsilon([\tilde{w}]_{cor}))$. Now, the unique solution $\bu - \bu_B$ follows by Lax-Milgram lemma, as the left-hand side constitutes a linear coercive operator in $H_0^1(\mathscr{O}_\varepsilon([\tilde{w}]_{cor}))$. Finally, the unique solution to $\eqref{app:plate}$ follows once again by Lax-Milgram lemma.
\bigskip

\noindent
\textbf{Compactness}: Let
\begin{eqnarray*}
(\tilde\bu, \tilde{w}) \in S_K:=\{( \bu, w) \in V: 
\|\bb{u}\|_{H^1(\mathscr{O}_\varepsilon([w]_{cor}))} + \|w\|_{H^{3+a}(\Gamma)}\leq K \}.
\end{eqnarray*}
Now, let us first point out that
\begin{eqnarray}
\|\tilde{w}\|_{C^{1,1}(\Gamma)}\leq C\|\tilde{w}\|_{H^{3+a}(\Gamma)} \leq CK, \label{C11}
\end{eqnarray}
and since $\tilde{w} = \nabla \tilde{w} = 0$ on $\partial\Gamma$, we have that the domain $\mathscr{O}_\varepsilon([\tilde{w}]_{cor})$ is of $C^{1,1}$ regularity. Next, testing $\eqref{app:fixed:cont}$ with $\rho$ gives us
\begin{eqnarray}
&&\delta \| \rho \|_{H^1(\mathscr{O}_\varepsilon([\tilde{w}]_{cor}))}^2+\int_{\Sigma_{out}} \underbrace{\rho T(\rho)\bu_B\cdot \bb{n}}_{\geq 0} \nonumber\\
&&= \int_{\mathscr{O}_\varepsilon([\tilde{w}]_{cor})} T(\rho) \tilde{\bb{u}}\cdot\nabla \rho -\int_{\Sigma_{in}} \rho \rho_B^\delta\bu_B\cdot \bb{n}   \nonumber\\ 
&&\leq C(\delta^{-1}) \| \tilde{\bb{u}}\|_{L^2(\mathscr{O}_\varepsilon([\tilde{w}]_{cor}))}^2 + C(\delta^{-1},\bb{u}_B) + \frac\delta2\| \rho \|_{H^1(\mathscr{O}_\varepsilon([\tilde{w}]_{cor}))}^2 \label{H1:rho}
\end{eqnarray}
where we estimated by the trace inequality
\begin{eqnarray*}
-\int_{\Sigma_{in}} \rho \rho_B^\delta\bu_B\cdot \bb{n} \leq \overline{\rho} \|\rho\|_{L^2(\Sigma_{in})}\|\bu_B\|_{L^2(\Sigma_{in})} \leq C(\delta^{-1},\bb{u}_B)+\frac\delta4 \|\rho\|_{H^1(\mathscr{O}_\varepsilon([\tilde{w}]_{cor}))}^2 .
\end{eqnarray*}
Now, since $-\rho_B^\delta\bb{u}_B+\rho \bb{u}_B \in W^{\frac13, \frac32}(\Sigma_{in}\cup\Sigma_{out})$ and $ \nabla \cdot (\rho \bb{u})\in L^{\frac32}( \mathscr{O}_\varepsilon([\tilde{w}]_{cor}))$, the unique solution $\tilde{\rho} \in W^{2,\frac32}( \mathscr{O}_\varepsilon([\tilde{w}]_{cor}))$ to the problem
\begin{eqnarray*}
\begin{cases}-\delta \Delta \tilde{\rho} + \delta \tilde{\rho} = \nabla \cdot (\rho \bb{u}),& \quad \text{ in } \mathscr{O}_\varepsilon([\tilde{w}]_{cor})\\
	\delta \partial_n \tilde{\rho} = 0,& \quad \text{ on } \partial \mathscr{O}_\varepsilon([\tilde{w}]_{cor})\setminus (\Sigma_{in}\cup \Sigma_{out}), \\
	\delta \partial_n \tilde{\rho} = -\rho_B^\delta\bb{u}_B \cdot \bb{n}+T(\rho)\bb{u}_B\cdot\bb{n},& \quad \text{ on } \Sigma_{in}, 
\end{cases}
\end{eqnarray*}
exists due to $C^{1,1}$ regularity of the domain \eqref{C11} and \cite[Theorem 2.4.2.7]{grisvard}, which has to coincide with $\rho$. Thus,
\begin{eqnarray*}
\|\nabla \rho\|_{L^3( \mathscr{O}_\varepsilon([\tilde{w}]_{cor}))} \leq C\| \rho\|_{W^{2,\frac32}( \mathscr{O}_\varepsilon([\tilde{w}]_{cor}))} \leq C(K,\delta^{-1},\bb{u}_B)
\end{eqnarray*}
so we can now improve the estimate due to $-\rho_B^\delta\bb{u}_B+\rho \bb{u}_B \in H^{\frac12}(\Sigma_{in}\cup\Sigma_{out})$
\begin{eqnarray}
&&\delta\|\rho\|_{H^{2}( \mathscr{O}_\varepsilon([\tilde{w}]_{cor}))} \nonumber\\
&&\leq C \|\nabla \rho\|_{L^3( \mathscr{O}_\varepsilon([\tilde{w}]_{cor}))}\|\bb{u}\|_{L^6( \mathscr{O}_\varepsilon([\tilde{w}]_{cor}))} + C \|-\rho \bb{u}_B+\rho_B^\delta \bb{u}_B\|_{H^{\frac12}( \mathscr{O}_\varepsilon([\tilde{w}]_{cor}))} \nonumber\\
&&\leq C(K,\delta^{-1},\bb{u}_B). \label{H2:rho}
\end{eqnarray}
Next, by choosing $\boldsymbol{\varphi}=\bb{u}-\bb{u}_B$ in $\eqref{app:fixed:mom}$ leads to
\begin{eqnarray*}
&&c\|\bb{u}-\bb{u}_B\|_{H^1(\mathscr{O}_\varepsilon([\tilde{w}]_{cor}))}^2 \\
&&\leq \int_{\mathscr{O}_\varepsilon([\tilde{w}]_{cor})}\mathbb{S}(\nabla \bu-\nabla \bb{u}_B):(\nabla\bb{u}-\nabla\bb{u}_B) + \delta \int_{\mathscr{O}_\varepsilon([\tilde{w}]_{cor})} \rho (|\nabla\bb{u}-\nabla \bu_B|^2+|\bb{u}-\bb{u}_B|^2)  \\
&& =\int_{\mathscr{O}_\varepsilon([\tilde{w}]_{cor})} (T(\rho) \tilde{\bu}\otimes \tilde{\bu}): (\nabla \bu-\nabla \bb{u}_B) + \int_{\mathscr{O}_\varepsilon([\tilde{w}]_{cor})} p_{\varepsilon,\delta}(\rho) \divg (\bb{u}-\bb{u}_B) -\delta\int_{\mathscr{O}_\varepsilon([\tilde{w}]_{cor})}  \tilde{\bu}\otimes\nabla\rho: (\nabla \bu-\nabla \bb{u}_B) \\
&& \quad - \int_{\mathscr{O}_\varepsilon([\tilde{w}]_{cor})}\mathbb{S}( \nabla \bu_B):(\nabla \bu-\nabla \bb{u}_B)-\delta  \int_{\mathscr{O}_\varepsilon([\tilde{w}]_{cor})} \rho\nabla \bu_B: (\nabla \bu-\nabla \bb{u}_B)- \delta  \int_{\mathscr{O}_\varepsilon([\tilde{w}]_{cor})} \rho\bu_B\cdot( \bu- \bb{u}_B)
\end{eqnarray*}
since $\bb{u}-\bb{u}_B \in H_0^1(\mathscr{O}_\varepsilon([\tilde{w}]_{cor}))$. Now, by using $\eqref{H2:rho}$ and the imbedding $\|\tilde{\bb{u}}\|_{L^6(\mathscr{O}_\varepsilon([\tilde{w}]_{cor}))} \leq C\|\tilde{\bb{u}}\|_{H^1(\mathscr{O}_\varepsilon([\tilde{w}]_{cor}))}$, one can easily control and absorb the terms on RHS to obtain
\begin{eqnarray}
\|\bb{u}\|_{H^1(\mathscr{O}_\varepsilon([\tilde{w}]_{cor}))} \leq C( K,\delta^{-1},\bb{u}_B). \label{uH1}
\end{eqnarray}
Noticing that $\bb{f}\mapsto -\nabla \cdot \mathbb{S}(\nabla \bb{f}) - \delta \rho \Delta \bb{f} + \delta \rho \bb{f}$ is uniformly elliptic in $\bb{f}$, while 
\begin{eqnarray*}
\divg (\nabla\rho \otimes \tilde{\bb{u}}), ~\nabla \rho \cdot \nabla \bb{u}  \in L^{\frac32}(\mathscr{O}_\varepsilon([\tilde{w}]_{cor})),
\end{eqnarray*}
one obtains by \cite[Theorem 2.4.2.5]{grisvard} and $C^{1,1}$ regularity of $\mathscr{O}_\varepsilon([\tilde{w}]_{cor})$ that $\bb{u}$ is in fact a strong solution satisfying 
\begin{eqnarray}
\|\bb{u}\|_{W^{2,\frac32}(\mathscr{O}_\varepsilon([\tilde{w}]_{cor}))} \leq C( K,\delta^{-1}, \bb{u}_B). \label{u:w232}
\end{eqnarray}
Finally, we can test $\eqref{app:plate}$ with $\boldsymbol{\varphi}= A[w]$ and $\psi = w$ where $A[w]$ is the extension defined in Lemma $\ref{extension}$ to obtain
\begin{eqnarray}
&&\int_\Gamma \kappa|\Delta w|^2  \nonumber\\
&&\leq\int_{\mathscr{O}_\varepsilon([\tilde{w}]_{cor})} (T(\rho) \tilde{\bu}\otimes \tilde{\bu}): \nabla A[w] +\int_{\mathscr{O}_\varepsilon([\tilde{w}]_{cor})} p_{\varepsilon,\delta}(\rho) \divg  A[w] \nonumber \\
&&\quad -\int_{\mathscr{O}_\varepsilon([\tilde{w}]_{cor})}\mathbb{S}(\nabla \bu):\nabla A[w] - \delta \int_{\mathscr{O}_\varepsilon([\tilde{w}]_{cor})} \nabla(\rho \tilde{\bu})\cdot \nabla A[w] - \delta  \int_{\mathscr{O}_\varepsilon([\tilde{w}]_{cor})} \rho \tilde{\bu}\cdot A[w]\nonumber\\
&&\leq C(K, \delta^{-1},\bb{u}_B ) \|A[w]\|_{H^1(\mathscr{O}_\varepsilon([\tilde{w}]_{cor}))} \nonumber\\
&&\leq C(\kappa^{-1}, K, \delta^{-1}, \bu_B)+ \frac\kappa2 \int_\Gamma |\Delta w|^2, \label{wH2}
\end{eqnarray}
where we used $\eqref{uH1}$ and the imbedding
\begin{eqnarray*}
&&\|\rho\|_{L^\infty( \mathscr{O}_\varepsilon([\tilde{w}]_{cor}))} \leq C\|\rho\|_{W^{1,6}( \mathscr{O}_\varepsilon([\tilde{w}]_{cor}))}\leq C\|\rho\|_{H^{2}( \mathscr{O}_\varepsilon([\tilde{w}]_{cor}))} \leq C(K,\delta^{-1},\bu_B),
\end{eqnarray*}
which follows by $\eqref{H2:rho}$. Here, we have also use that the corrected domain $\mathscr{O}_\varepsilon([\tilde{w}]_{cor}))$ has a uniform imbedding constant since it is uniformly Lipschitz. Now,
one can estimate for any $\psi \in C_c^\infty(\Gamma)$
\begin{eqnarray}
&&\int_\Gamma \kappa\Delta w \Delta \psi\nonumber\\
&&\leq\int_{\mathscr{O}_\varepsilon([\tilde{w}]_{cor})} (T(\rho) \tilde{\bu}\otimes \tilde{\bu}): \nabla A[\psi] +\int_{\mathscr{O}_\varepsilon([\tilde{w}]_{cor})} p_{\varepsilon,\delta}(\rho) \divg  A[\psi] \nonumber \\
&&\quad -\int_{\mathscr{O}_\varepsilon([\tilde{w}]_{cor})}\mathbb{S}(\nabla \bu):\nabla A[\psi] - \delta \int_{\mathscr{O}_\varepsilon([\tilde{w}]_{cor})} \nabla(\rho \tilde{\bu})\cdot \nabla A[\psi] - \delta  \int_{\mathscr{O}_\varepsilon([\tilde{w}]_{cor})} \rho \tilde{\bu}\cdot A[\psi]\nonumber\\
&&\leq C(K, \delta^{-1}, \bb{u}_B) \|A[\psi]\|_{H^1(\mathscr{O}_\varepsilon([\tilde{w}]_{cor}))} \nonumber\\
&&\leq C(K, \delta^{-1}, \bb{u}_B)\|\psi\|_{H^{\frac12}(\Gamma)} \label{wH72}
\end{eqnarray}
so the elliptic regularity implies \cite{grisvard,karmanplates}
\begin{eqnarray}
\kappa\|w\|_{H^{\frac72}(\Gamma)} \leq C( K,\delta^{-1},\bb{u}_B) \label{72est}
\end{eqnarray}
so $\mathcal{T}$ is compact in $w$, which combined with $\eqref{u:w232}$ gives us the compactness in $\bb{u}$ by Lemma \ref{conv:lemma}.

\bigskip
\noindent
\textbf{Continuity}: Let $(\bb{u}_n,w_n)\to (\bb{u},w)$ in $V$. The goal is to show that
\begin{eqnarray*}
(\bb{U}_n,\bb{W}_n):=\mathcal{T}(\bb{u}_n,w_n) \to \mathcal{T}(\bb{u},w)=: (\bb{U},W) ~ \text{ in } ~ V.
\end{eqnarray*}
Due to compactness, we know that $(\bb{U}_n,\bb{W}_n)$ converges strongly in $V$, so it is enough to show that $(\bb{U}_n,\bb{W}_n) \rightharpoonup (\bb{U},W)$ in $V$, since the weak and strong limits coincide if they exist. First, denote $\rho_n = \rho_n[\bb{u}_n,w_n]$ which is the solution to the elliptic problem
\begin{eqnarray}
&&\delta\int_{\mathscr{O}([w_n]_{cor})} \nabla\rho_n \cdot \nabla \varphi +  \delta\int_{\mathscr{O}([w_n]_{cor})} \rho_n \varphi  - \int_{\mathscr{O}([w_n]_{cor})} T(\rho_n)\bb{u}_n \cdot \nabla \varphi \nonumber \\
&&= -\int_{\Sigma_{in}} \rho_B^\delta\bu_B\cdot \bb{n} \varphi - \int_{\Sigma_{out}} T(\rho_n)\bu_B\cdot \bb{n} \varphi \label{app:fixed:cont:alt}
\end{eqnarray}
that holds for all $\varphi \in C^\infty(\overline{\mathscr{O}([w_n]_{cor})})$. By Lemma \ref{conv:lemma}, one has
\begin{eqnarray*}
&&\rho_n \chi_{|\mathscr{O}([w_n]_{cor})} \to \rho \chi_{|\mathscr{O}_\varepsilon([w]_{cor})} \quad \text{in} \quad L^2(\mathbb{R}^3),\\
&&\nabla \rho_n \chi_{|\mathscr{O}([w_n]_{cor})} \to \nabla \rho \chi_{|\mathscr{O}_\varepsilon([w]_{cor})} \quad \text{in} \quad L^2(\mathbb{R}^3),
\end{eqnarray*}
while strong convergence of $(\bb{u}_n,w_n)$ gives us 
\begin{eqnarray*}
\lim_{n\to \infty}\| \bb{u} - \bb{u}_n\|_{H^1(\mathscr{O}_{max})}=0,
\end{eqnarray*}
so we can pass to the limit $n\to \infty$ in $\eqref{app:fixed:cont:alt}$ to obtain 
\begin{eqnarray*}
&&\delta\int_{\mathscr{O}_\varepsilon([w]_{cor})} \nabla\rho \cdot \nabla \varphi +  \delta\int_{\mathscr{O}_\varepsilon([w]_{cor})} \rho \varphi  - \int_{\mathscr{O}_\varepsilon([w]_{cor})} T(\rho)\bb{u} \cdot \nabla \varphi \nonumber \\
&&= -\int_{\Sigma_{in}} \rho_B^\delta\bu_B\cdot \bb{n} \varphi - \int_{\Sigma_{out}} T(\rho)\bu_B\cdot \bb{n} \varphi 
\end{eqnarray*}
which implies $\rho = \rho[\bb{u},w]$ by the uniqueness of solution of $\eqref{app:fixed:cont}$. Noticing that, by imbedding of Sobolev spaces and Lemma \ref{conv:lemma}
\begin{eqnarray*}
&&\bb{u}_n \chi_{|\mathscr{O}([w_n]_{cor})} \to \bb{u} \chi_{|\mathscr{O}_\varepsilon([w]_{cor})} \quad \text{in} \quad L^p(\mathbb{R}^3) , \quad p<6, \\
&&\nabla\rho_n \chi_{|\mathscr{O}([w_n]_{cor})} \to \nabla \rho \chi_{|\mathscr{O}_\varepsilon([w]_{cor})} \quad \text{in} \quad L^p(\mathbb{R}^3) , \quad p<6,
\end{eqnarray*}
and observing that $\bb{U}_n$ satisfies
\begin{eqnarray}
&&\int_{\mathscr{O}([w_n]_{cor})}\mathbb{S}(\nabla \bb{U}_n):\nabla\boldsymbol{\varphi}+\delta  \int_{\mathscr{O}([w_n]_{cor})} \rho_n\nabla \bb{U}_n\cdot \nabla\boldsymbol{\varphi} + \delta  \int_{\mathscr{O}([w_n]_{cor})} \rho_n \bb{U}_n\cdot\boldsymbol{\varphi} \nonumber \\
&&=-\delta\int_{\mathscr{O}([w_n]_{cor})}  \bb{u}_n\otimes\nabla\rho_n: \nabla\boldsymbol{\varphi} +\int_{\mathscr{O}([w_n]_{cor})} (T(\rho_n) \bb{u}_n\otimes \bb{u}_n): \nabla\boldsymbol{\varphi} + \int_{\mathscr{O}([w_n]_{cor})} p_{\varepsilon,\delta}(\rho_n) \divg  \boldsymbol{\varphi} \nonumber \\ &&\label{app:fixed:mom:alt}
\end{eqnarray}
for every $\boldsymbol\varphi\in C_c^\infty(\mathscr{O}([w_n]_{cor}))$, we can pass to the limit $n\to \infty$ in $\eqref{app:fixed:mom:alt}$ to conclude that
\begin{eqnarray}
&&\int_{\mathscr{O}_\varepsilon([w]_{cor})}\mathbb{S}(\nabla \overline{\bb{U}}):\nabla\boldsymbol{\varphi}+\delta  \int_{\mathscr{O}_\varepsilon([w]_{cor})} \rho\nabla \overline{\bb{U}}\cdot \nabla\boldsymbol{\varphi} + \delta  \int_{\mathscr{O}_\varepsilon([w]_{cor})} \rho \overline{\bb{U}}\cdot\boldsymbol{\varphi} \nonumber \\
&&=-\delta\int_{\mathscr{O}_\varepsilon([w]_{cor})}  \bb{u}\otimes\nabla\rho: \nabla\boldsymbol{\varphi} +\int_{\mathscr{O}_\varepsilon([w]_{cor})} (T(\rho) \bb{u}\otimes \bb{u}): \nabla\boldsymbol{\varphi} + \int_{\mathscr{O}_\varepsilon([w]_{cor})} p_{\varepsilon,\delta}(\rho) \divg  \boldsymbol{\varphi} \label{app:fixed:mom:alt2}
\end{eqnarray}
for every $\boldsymbol\varphi\in C_c^\infty(\mathscr{O}_\varepsilon([w]_{cor}))$, where $\overline{\bb{U}}$ is the weak limit of $\bb{U}$ in the following sense
\begin{eqnarray*}
&&\bb{U}_n \chi_{|\mathscr{O}([w_n]_{cor})} \rightharpoonup \overline{\bb{U}} \chi_{|\mathscr{O}_\varepsilon([w]_{cor})} \quad \text{in} \quad L^2(\mathbb{R}^3),\\
&&\nabla\bb{U}_n \chi_{|\mathscr{O}([w_n]_{cor})} \rightharpoonup \nabla \overline{\bb{U}} \chi_{|\mathscr{O}_\varepsilon([w]_{cor})} \quad \text{in} \quad L^2(\mathbb{R}^3).
\end{eqnarray*}
Since the solution to $\eqref{app:fixed:mom:alt2}$ is unique, we conclude that $\overline{\bb{U}}=\bb{U} = \bb{U}[\rho[\bb{u},w],\bb{u},w]$, which directly implies that $W = W[\rho,\bb{u},w]$. Therefore, $\mathcal{T}$ is continuous.

\bigskip

\noindent
\textbf{Uniform estimates of the fixed-points set}: The last step is to show that the set 
$$\{(\bu,w) = \theta \mathcal{T}(\bu,w), \theta \in [0,1]\}$$
is bounded. Therefore, assume that $\theta\in [0,1]$ and let $(\bu,w) = \theta \mathcal{T}(\bu,w)$. First, let us point out that testing continuity equation with $1$ gives us
\begin{eqnarray}
\delta \int_{ \mathscr{O}_\varepsilon([w]_{cor})} \rho +\int_{\Sigma_{out}} \underbrace{ T(\rho)\bu_B\cdot \bb{n}}_{\geq 0}= -\int_{\Sigma_{in}} \rho_B^\delta\bu_B\cdot \bb{n} \leq C(\bb{u}_B). \label{rhoL1}
\end{eqnarray}
Then from the fluid momentum equation we obtain
\begin{eqnarray}
&& c\|\bb{u}-\bb{u}_B\|_{H^1( \mathscr{O}_\varepsilon([w]_{cor}))}^2 \nonumber \\
&&\leq \int_{ \mathscr{O}_\varepsilon([w]_{cor})}\mathbb{S}(\nabla \bu-\nabla \bu_B) :(\nabla\bu-\nabla\bu_B)+ \theta  \int_{ \mathscr{O}_\varepsilon([w]_{cor})} \underbrace{p_{\varepsilon,\delta}(\rho) \divg \bb{u}_B}_{\geq 0} \nonumber\\
&&+ \delta (1-\theta) \int_{ \mathscr{O}_\varepsilon([w]_{cor})} \rho \big(|\nabla \bb{u} - \nabla \bb{u}_B|^2 + |\bb{u} - \bb{u}_B|^2\big) \nonumber \\
&&=- \int_{ \mathscr{O}_\varepsilon([w]_{cor})}\mathbb{S}( \nabla \bu_B):(\nabla \bu-\nabla \bb{u}_B)+\theta\int_{ \mathscr{O}_\varepsilon([w]_{cor})} (T(\rho) \bu\otimes \bu): (\nabla\bu-\nabla\bu_B)   \nonumber\\
&&\quad +\theta\int_{ \mathscr{O}_\varepsilon([w]_{cor})} p_{\varepsilon,\delta}(\rho) \divg \bb{u}- \delta\theta  \int_{ \mathscr{O}_\varepsilon([w]_{cor})} \nabla(\rho \bu): (\nabla\bu - \nabla \bu_B) - \delta\theta  \int_{ \mathscr{O}_\varepsilon([w]_{cor})} \rho \bu\cdot (\bu - \bu_B) \nonumber \\
&& \quad -\delta (1-\theta) \int_{ \mathscr{O}_\varepsilon([w]_{cor})} \rho \big( \nabla\bb{u}_B \cdot(\nabla \bb{u} - \nabla \bb{u}_B) + \bb{u}_B\cdot(\bb{u} - \bb{u}_B)\big). \label{uni:Schauder}
\end{eqnarray}
Before we proceed, by using $\eqref{rhoL1}$ and the boundedness $\|\bb{u}_B\|_{W^{1,\infty}(\mathscr{O}_{max})}\leq C$ coming from Lemma \ref{ext:op}, one has
\begin{eqnarray*}
&&\delta (1-\theta) \int_{ \mathscr{O}_\varepsilon([w]_{cor})} \rho \big( \nabla\bb{u}_B \cdot(\nabla \bb{u} - \nabla \bb{u}_B) + \bb{u}_B\cdot(\bb{u} - \bb{u}_B)\big) \\
&&\leq \frac12\underbrace{\delta \int_{ \mathscr{O}_\varepsilon([w]_{cor})} \rho (|\bb{u}_B|^2+ |\nabla \bb{u}_B|^2)}_{\leq C(\bb{u}_B)} + \frac12\delta \underbrace{(1-\theta)^2}_{\leq (1-\theta)} \int_{ \mathscr{O}_\varepsilon([w]_{cor})} \rho \big(|\nabla \bb{u} - \nabla \bb{u}_B|^2 + |\bb{u} - \bb{u}_B|^2\big)  
\end{eqnarray*}
so the second term on the right-hand side can be absorbed. In order to control the remaining terms on the right-hand side of $\eqref{uni:Schauder}$, we can first renormalize the continuity equation (in the strong form) by multiplying it with $G'(\rho)$ to obtain
\begin{eqnarray*}
&&\delta G''(\rho)|\nabla \rho|^2+\delta G'(\rho)\rho -\delta \divg (G'(\rho)\nabla\rho)+  \theta\divg(H(\rho)\bu) + \theta[G'(\rho)T(\rho)-H(\rho)]\divg \bu = 0
\end{eqnarray*}
where $H'(\rho)=G'(\rho)T(\rho)$, so by choosing $G = G_{\varepsilon,\delta}$ such that
\begin{eqnarray*}
G_{\varepsilon,\delta}''(\rho) = \frac{p_{\varepsilon,\delta}'(\rho)}{T(\rho)} \quad \text{ or equivalently } \quad G_{\varepsilon,\delta}'(\rho)T(\rho) - H_{\varepsilon,\delta}(\rho) = p_{\varepsilon,\delta}(\rho),
\end{eqnarray*}
we can express the term $\theta p_{\varepsilon,\delta}(\rho) \divg \bu$ in $\eqref{uni:Schauder}$. This allows us to repeat the calculation from \cite[Section 5]{FN2018} to obtain
\begin{eqnarray}
&&\| \bu \|_{H^1( \mathscr{O}_\varepsilon([w]_{cor}))}^2 + \delta \theta\int_{ \mathscr{O}_\varepsilon([w]_{cor})} \big[ G_{\varepsilon,\delta}''(\rho)|\nabla \rho|^2 + \rho G_{\varepsilon,\delta}'(\rho) \big] \leq c\| \bu_B\|_{H^1( \mathscr{O}_{max})}^2 \label{u:est}
\end{eqnarray}
where $c$ is uniform with respect to $\delta$ and $\varepsilon$, so estimating as in $\eqref{H1:rho},\eqref{H2:rho}$, one has
\begin{eqnarray*}
\|\rho\|_{H^2( \mathscr{O}_\varepsilon([w]_{cor}))} \leq C(\overline{
	\rho},\delta^{-1}, \bb{u}_B). \label{rho:est}
	\end{eqnarray*}
	With these estimates, one can repeat the calculations from $\eqref{wH2}$ and $\eqref{72est}$ to conclude
	\begin{eqnarray*}
\kappa\|w\|_{H^{\frac72}(\Gamma)} \leq C( \overline{\rho},\delta^{-1},\bb{u}_B).
\end{eqnarray*}
This concludes the proof of boundedness and consequently ensures existence of a fixed point $\mathcal{T}(\bb{u},w) = (\bb{u},w)  \in H_B^1(\mathscr{O}_{max})\times \big[H_0^2(\Gamma)\cap H^{3+a}(\Gamma)\big]$  by the Schaefer corollary to the Schauder Theorem, which by construction satisfies $(\bb{u},w) \in V$. Therefore, we have shown that there exists a weak solution $(\rho,\bb{u},w)$ in the sense of Definition $\ref{app:sol}$.

\subsection{Limit $\varepsilon\to 0$}\label{sec:limit:eps}
Denote $(\rho_\varepsilon,\bb{u}_\varepsilon,w_\varepsilon)$ the solution in the sense of Definition $\ref{app:sol}$ obtained in the previous section. First, noticing that
\begin{eqnarray*}
G_{\varepsilon,\delta}''(\rho_\varepsilon)= \frac{p_{\varepsilon,\delta}'(\rho_\varepsilon)}{T(\rho)} = \frac{p_\varepsilon'(\rho_\varepsilon)+\sqrt{\delta}\rho_\varepsilon }{T(\rho_\varepsilon)} \geq \frac{\sqrt{\delta}}{\overline{\rho}}
\end{eqnarray*}
we can control
\begin{eqnarray*}
\frac{\delta^{\frac32}}{\overline{\rho}}\int_{\mathscr{O}_\varepsilon([w_\varepsilon]_{cor})}|\nabla \rho_\varepsilon|^2 \leq    \int_{\mathscr{O}_\varepsilon([w_\varepsilon]_{cor})} \delta G_{\varepsilon,\delta}''(\rho_\varepsilon)|\nabla \rho_\varepsilon|^2 \label{h1:rho:eps*}
\end{eqnarray*}
so the estimate $\eqref{u:est}$ for $\theta =1$ give us the uniform bound
\begin{eqnarray}
&&\| \bu_\varepsilon \|_{H^1(\mathscr{O}_\varepsilon([w]_{cor})))}^2 + \frac{\delta^{\frac32}}{\overline{\rho}}\int_{\mathscr{O}_\varepsilon([w_\varepsilon]_{cor})}|\nabla \rho_\varepsilon|^2  \leq c\| \bu_B\|_{H^1(\mathscr{O}_{max})}^2 \label{u:rho:eps}
\end{eqnarray}
which, by estimating as in $\eqref{H2:rho}$, implies
\begin{eqnarray}
&&\|\rho_\varepsilon\|_{W^{1,6}(\mathscr{O}_\varepsilon([w_\varepsilon]_{cor}))}\leq C\|\rho_\varepsilon\|_{H^2(\mathscr{O}_\varepsilon([w_\varepsilon]_{cor}))} \leq C(\delta^{-1},\bb{u}_B),\label{rho:h2:eps}
\end{eqnarray}
while then estimating as in $\eqref{H72:delta}$ gives us
\begin{eqnarray*}
\kappa\|w_\varepsilon\|_{H^{\frac72}(\Gamma)} \leq C(\delta^{-1},\bb{u}_B). 
\end{eqnarray*}
We thus conclude
\begin{eqnarray*}
\bu_\varepsilon\chi_{|\mathscr{O}_\varepsilon([w_\varepsilon]_{cor})} \rightharpoonup \bu\chi_{|\mathscr{O}([w]_{cor})},& \quad &\text{ weakly in } L^2(\mathbb{R}^3), \\
\nabla \bu_\varepsilon\chi_{|\mathscr{O}_\varepsilon([w_\varepsilon]_{cor})} \rightharpoonup \nabla\bu\chi_{|\mathscr{O}([w]_{cor})},& \quad &\text{ weakly in } L^2(\mathbb{R}^3), \\
\rho_\varepsilon\chi_{|\mathscr{O}_\varepsilon([w_\varepsilon]_{cor})} \rightharpoonup \rho\chi_{|\mathscr{O}([w]_{cor})},& \quad &\text{ weakly in } L^2(\mathbb{R}^3), \\
\nabla \rho_\varepsilon\chi_{|\mathscr{O}_\varepsilon([w_\varepsilon]_{cor})} \rightharpoonup \nabla\rho\chi_{|\mathscr{O}([w]_{cor})},& \quad &\text{ weakly in } L^2(\mathbb{R}^3), \\
w_\varepsilon  \rightharpoonup w, & \quad &\text{ weakly in } H_0^2(\Gamma)\cap H^{\frac72}(\Gamma),
\end{eqnarray*}
as $\varepsilon\to 0$, at least for a subsequence. Next, following the steps from \cite[Section 6]{FN2018}, one obtains
$$   0\leq \rho < \overline{\rho} \quad \text{ a.e. on }  \mathscr{O}([w]_{cor}), $$
and
\begin{eqnarray*}
\|p_{\varepsilon,\delta}(\rho_\varepsilon)\|_{L^2( \mathscr{O}_\varepsilon([w_\varepsilon]_{cor}))} \leq C(\delta^{-1}), 
\end{eqnarray*}
so by the estimate $\eqref{rho:h2:eps}$ and Lemma \ref{conv:lemma}
\begin{eqnarray*}
&p_{\varepsilon,\delta}(\rho_\varepsilon)\chi_{|\mathscr{O}_\varepsilon([w_\varepsilon]_{cor})}  \to  p_{\delta}(\rho)\chi_{|\mathscr{O}([w]_{cor})},& \quad \text{ in } L^2(\mathbb{R}^3).
\end{eqnarray*}
Therefore, one obtains that the limiting functions satisfy
\begin{eqnarray}
&&\int_{ \mathscr{O}([w]_{cor})} (\rho \bu\otimes \bu): \nabla\boldsymbol{\varphi} + \int_{  \mathscr{O}([w]_{cor})} (p(\rho)+\sqrt{\delta}\rho) \divg  \boldsymbol{\varphi}- \int_{  \mathscr{O}([w]_{cor})}\mathbb{S}(\nabla \bu):\nabla\boldsymbol{\varphi} \nonumber \\
&&- \delta  \int_{  \mathscr{O}([w]_{cor})} \nabla(\rho \bu): \nabla\boldsymbol{\varphi} - \delta  \int_{  \mathscr{O}([w]_{cor})} \rho \bu\cdot\boldsymbol{\varphi}  -\int_\Gamma \kappa\Delta w \Delta \psi= 0 \label{mom:eps}
\end{eqnarray}
for all $\boldsymbol{\varphi} \in C_c^\infty( \mathscr{O}([w]_{cor})\cup \Gamma^{[w]_{cor}})$ and $\psi \in C_c^\infty(\Gamma)$ such that $\boldsymbol{\varphi}_{|\Gamma^{[w]_{cor}}}  = \psi \bb{e}_z$, and
\begin{eqnarray*}
&&\delta\int_{ \mathscr{O}([w]_{cor})} \nabla\rho \cdot \nabla \varphi +  \delta\int_{ \mathscr{O}([w]_{cor})} \rho \varphi  - \int_{ \mathscr{O}([w]_{cor})} \rho\bb{u} \cdot \nabla \varphi\\
&&= -\int_{\Sigma_{in}} \rho_B^\delta\bu_B\cdot \bb{n} \varphi - \int_{\Sigma_{out}} \rho\bu_B\cdot \bb{n} \varphi \label{cont:eps}
\end{eqnarray*}
holds for all $\varphi \in C^\infty(\overline{ \mathscr{O}([w]_{cor})})$.

\subsection{Limit $\delta\to 0$}\label{sec:limit:delta}
Next, denote the solution obtained in previous section as $(\rho_\delta,\bb{u}_\delta, w_\delta)$ which solves $\eqref{mom:eps}$ and $\eqref{cont:eps}$. Since $\|[w_\delta]_{cor}\|_{C^{0,1}(\Gamma)}\leq \frac14$ and $[w_\delta]_{cor}\in C_0(\Gamma)$, there exists $s\in C_0(\Gamma)$ and a converging subsequence
$$[w_\delta]_{cor} \to s \quad \text{ in } C(\Gamma)$$
as $\delta \to 0$. Moreover, from $\eqref{u:rho:eps}$, the following estimates hold
\begin{eqnarray*}
\| \bu_\delta \|_{H^1(\mathscr{O}([w]_{cor})))}^2+\frac{\delta^{\frac32}}{\overline{\rho}}\int_{\mathscr{O}([w]_{cor})}|\nabla \rho_{\delta}|^2 \leq c\| \bu_B\|_{H^1(\mathscr{O}_{max})}^2.\label{h1:rho:eps}
\end{eqnarray*}
Finally, since $\rho_\delta < \overline{\rho}$, we can combine the above estimates to conclude
\begin{eqnarray*}
\bu_\delta\chi_{|\mathscr{O}([w_\delta]_{cor})} \rightharpoonup \bu\chi_{|\mathscr{O}(s)},& \quad &\text{ weakly in } L^2(\mathbb{R}^3), \\
\nabla \bu_\delta\chi_{|\mathscr{O}([w_\delta]_{cor})} \rightharpoonup \nabla\bu\chi_{|\mathscr{O}(s)},& \quad &\text{ weakly in } L^2(\mathbb{R}^3), \\
\rho_\delta\chi_{|\mathscr{O}([w_\delta]_{cor})} \rightharpoonup \rho\chi_{|\mathscr{O}(s)},& \quad &\text{ weakly in } L^2(\mathbb{R}^3), \\
\delta\nabla \rho_\delta\chi_{|\mathscr{O}_\delta([w_\delta]_{cor})} \rightharpoonup 0,& \quad &\text{ weakly in } L^2(\mathbb{R}^3),
\end{eqnarray*}
as $\delta \to 0$. Since by Lemma \ref{conv:lemma}
$$ \bu_\delta\chi_{|\mathscr{O}([w_\delta]_{cor})} \to \bu\chi_{|\mathscr{O}(s)}, \quad \text{ in } L^2(\mathbb{R}^3) $$
we can pass to the limit in the continuity equation to conclude that
\begin{eqnarray*}
\int_{ \mathscr{O}(s)}\rho\bb{u} \cdot \nabla \varphi = \int_{\Sigma_{in}} \rho_B\bu_B\cdot \bb{n} \varphi 
\end{eqnarray*}
holds for all $\varphi \in C^\infty(\overline{ \mathscr{O}(s)})$ such that $\varphi_{|\Sigma_{out}} = 0$. Since $\rho_B<\overline{\rho}$, we cannot have $\rho = \overline{\rho}$ a.e. on $\mathscr{O}(s)$. Indeed, assuming that $\rho = \overline{\rho}$ a.e. on $\mathscr{O}(s)$,  it is easy to construct a sequence of test functions $\varphi_n \in C^\infty(\overline{ \mathscr{O}(s)})$ such that
\begin{eqnarray*}
{\varphi_n}_{|\Sigma_{out}} = 0,  \quad {\varphi_n}_{|\Sigma_{left}} = \psi \in C_c^\infty(\Sigma_{left})\text{ with } {\psi}_{|\Sigma_{in}}=1,
\end{eqnarray*}
and
$$\lim\limits_{n\to \infty} \int_{ \mathscr{O}(s)}\bb{f} \cdot \nabla \varphi_n = \int_{ \Sigma_{left}}\bb{f}\cdot \bb{n}\psi \quad \text{ for any }\bb{f} \in H^1(\mathscr{O}(s)).$$
Plugging $\varphi_n$ into the continuity equation and passing to the limit $n\to \infty$ gives us
\begin{eqnarray*}
\int_{\Sigma_{in}} \rho_B\bu_B\cdot \bb{n}= \lim\limits_{n\to \infty} \int_{ \mathscr{O}(s)}\rho\bb{u} \cdot \nabla \varphi_n = \overline{\rho}\lim\limits_{n\to \infty} \int_{ \mathscr{O}(s)}\bb{u} \cdot \nabla \varphi_n = \overline{\rho}\int_{\Sigma_{in}}\bb{u}_B \cdot \bb{n} 
\end{eqnarray*}
which is obviously contradiction with $\rho_B< \overline{\rho}$. Thus, we can conclude that
$$\frac{1}{|\mathscr{O}(s)|} \int_{\mathscr{O}(s)}\rho < \overline{\rho}$$
and consequently there exists $\beta>1$ such that for a non-relabeled subsequence
$$\limsup_{\delta\to 0}\frac{\beta}{|\mathscr{O}([w_\delta]_{cor})|} \int_{\mathscr{O}([w_\delta]_{cor}))}\rho_\delta<\overline{\rho}. $$
This allows us to bound the pressure as in \cite[Section 7.1]{FN2018} by testing the $\eqref{mom:eps}$ with $B_{ \mathscr{O}([w]_{cor})}\left[\rho - \frac1{|\mathscr{O}([w]_{cor})|}\int_{ \mathscr{O}([w]_{cor})}\rho\right]$ where $B_{ \mathscr{O}([w]_{cor})}: L^2( \mathscr{O}([w]_{cor})) \to H_0^1( \mathscr{O}([w]_{cor}))$ is the inverse divergence Bogovskii operator, and obtain
\begin{eqnarray*}
\int_{ \mathscr{O}([w]_{cor})} \rho p_{\varepsilon,\delta}(\rho) \leq C((\beta-1)^{-1},\| \bu_B\|_{H^1(\mathscr{O}_{max})}).
\end{eqnarray*}
Note that by construction that $\mathscr{O}([w]_{cor})$ is uniformly Lipschitz, which ensures a uniform Bogovskii constant. Moreover, the constant on the right depends on $(\beta-1)^{-1}$ which is a consequence of a contradiction argument and thus seems to be non-quantifiable, so this estimate might be very large and it is not obvious how to reduce it. We continue by testing $\eqref{mom:eps}$ with $B_{ \mathscr{O}([w_\delta]_{cor})}\left[p(\rho_\delta)^\alpha - \frac1{|\mathscr{O}([w_\delta]_{cor})|}\int_{ \mathscr{O}([w_\delta]_{cor})} p(\rho_\delta)^\alpha\right]$ to bound
\begin{eqnarray}
&&\int_{ \mathscr{O}([w_\delta]_{cor})}(p(\rho_\delta)+\sqrt{\delta}\rho_\delta)p(\rho_\delta)^\alpha \nonumber\\
&&=\int_{ \mathscr{O}([w_\delta]_{cor})}(p(\rho_\delta)+\sqrt{\delta}\rho_\delta) \frac1{|\mathscr{O}([w_\delta]_{cor})|}\int_{ \mathscr{O}([w_\delta]_{cor})} p(\rho_\delta)^\alpha \nonumber\\
&&\quad-  \int_{ \mathscr{O}([w_\delta]_{cor})}\rho_\delta \bb{u}_\delta\otimes \bb{u}_\delta:\nabla B_{ \mathscr{O}([w_\delta]_{cor})}\left[p(\rho_\delta)^\alpha -\frac1{|\mathscr{O}([w_\delta]_{cor})|} \int_{ \mathscr{O}([w_\delta]_{cor})} p(\rho_\delta)^\alpha\right]\nonumber \\
&&\quad+\int_{ \mathscr{O}([w_\delta]_{cor})} \mathbb{S}(\nabla \bb{u}_\delta):\nabla B_{ \mathscr{O}([w_\delta]_{cor})}\left[p(\rho_\delta)^\alpha -\frac1{|\mathscr{O}([w_\delta]_{cor})|} \int_{ \mathscr{O}([w_\delta]_{cor})} p(\rho_\delta)^\alpha\right] \nonumber\\
&&\quad+ \delta  \int_{ \mathscr{O}([w_\delta]_{cor})}\nabla(\rho_\delta \bb{u}_\delta):\nabla B_{ \mathscr{O}([w_\delta]_{cor})}\left[p(\rho_\delta)^\alpha -\frac1{|\mathscr{O}([w_\delta]_{cor})|} \int_{ \mathscr{O}([w_\delta]_{cor})} p(\rho_\delta)^\alpha\right]\nonumber \\
&&\quad+\delta  \int_{ \mathscr{O}([w_\delta]_{cor})}\rho_\delta \bb{u}_\delta\cdot  B_{ \mathscr{O}([w_\delta]_{cor})}\left[p(\rho_\delta)^\alpha -\frac1{|\mathscr{O}([w_\delta]_{cor})|} \int_{ \mathscr{O}([w_\delta]_{cor})} p(\rho_\delta)^\alpha\right] \nonumber\\
&&\leq C + \overline{\rho}\|\bb{u}_\delta\|_{L^6(\mathscr{O}([w_\delta]_{cor}))}^2 \left(\|p(\rho_\delta)^\alpha\|_{L^{\frac32}(\mathscr{O}([w_\delta]_{cor}))}+C\right)+ \|\bb{u}_\delta\|_{H^1(\mathscr{O}([w_\delta]_{cor}))} \left(\|p(\rho_\delta)^\alpha\|_{L^2(\mathscr{O}([w_\delta]_{cor}))}+C\right) \nonumber\\
&&\quad+\delta\left( \|\nabla \rho_\delta \|_{L^2(\mathscr{O}([w_\delta]_{cor}))}\|\bb{u}_\delta \|_{L^{6}(\mathscr{O}([w_\delta]_{cor}))}+ C\overline{\rho}\|\bb{u}_\delta \|_{H^1(\mathscr{O}([w_\delta]_{cor}))} \right)        \left(\|p(\rho_\delta)^\alpha\|_{L^3(\mathscr{O}([w_\delta]_{cor}))}+C\right)\nonumber\\
&&\quad + \delta \overline{\rho}\|\bb{u}_\delta \|_{L^6(\mathscr{O}([w_\delta]_{cor}))} \left(\|p(\rho_\delta)^\alpha\|_{L^{\frac65}(\mathscr{O}([w_\delta]_{cor}))}+C\right) \nonumber\\
&&\leq C + \frac12\|p(\rho_\delta)^\alpha\|_{L^3(\mathscr{O}([w_\delta]_{cor}))}^3 = C + \frac12\int_{ \mathscr{O}([w_\delta]_{cor})} p(\rho_\delta)^{3\alpha} \label{press:delta}
\end{eqnarray}
for $C=C((\beta-1)^{-1},\| \bu_B\|_{H^1(\mathscr{O}_{max})})$, where the last term can be absorbed for $\alpha = \frac12$, so
\begin{eqnarray}
\int_{ \mathscr{O}([w_\delta]_{cor})}(p(\rho_\delta)+\sqrt{\delta}\rho_\delta) p(\rho_\delta)^{\frac12} \leq  C(\| \bu_B\|_{H^1(\mathscr{O}_{max})}). \label{press:est}
\end{eqnarray}
Finally, by choosing $\psi= w_\delta$ and $\boldsymbol{\varphi} = A[w_\delta]$ in $\eqref{mom:eps}$, where $A$ is the extension operator from Lemma $\ref{extension}$, one obtains

\begin{eqnarray*}
&&\int_\Gamma \kappa|\Delta w_\delta|^2  \nonumber\\
&&= \int_{ \mathscr{O}([w_\delta]_{cor})} (\rho_\delta \bu_\delta\otimes \bu_\delta): \nabla A[w_\delta] + \int_{  \mathscr{O}([w_\delta]_{cor})} (p(\rho_\delta)+\sqrt{\delta}\rho_\delta) \divg  A[w_\delta]- \int_{  \mathscr{O}([w_\delta]_{cor})}\mathbb{S}(\nabla \bu_\delta):\nabla A[w_\delta] \nonumber \\
&&\quad - \delta  \int_{  \mathscr{O}([w_\delta]_{cor})} \nabla(\rho_\delta \bu_\delta): \nabla A[w_\delta] - \delta  \int_{  \mathscr{O}([w_\delta]_{cor})} \rho_\delta \bu_\delta\cdot A[w_\delta] \nonumber\\ 
&&\leq C \overline{\rho} \|\bb{u}_\delta\|_{L^6( \mathscr{O}([w_\delta]_{cor}))}^2 \|\nabla A[w_\delta]\|_{L^2( \mathscr{O}([w_\delta]_{cor}))}+ \big(\|p(\rho_\delta)\|_{L^{\frac32}( \mathscr{O}([w_\delta]_{cor}))}+C\delta \overline{\rho} \big) \|\nabla A[w_\delta]\|_{L^3( \mathscr{O}([w_\delta]_{cor}))}\nonumber \\
&&\quad + \|\nabla \bb{u}_\delta\|_{L^2( \mathscr{O}([w_\delta]_{cor}))} \|\nabla A[w_\delta]\|_{L^2( \mathscr{O}([w_\delta]_{cor}))} \nonumber\\
&&\quad +  C\delta   \|\nabla \rho_\delta\|_{L^2( \mathscr{O}([w_\delta]_{cor}))}\| \bb{u}_\delta\|_{L^6( \mathscr{O}([w_\delta]_{cor}))}\|\nabla A[w_\delta]\|_{L^3( \mathscr{O}([w_\delta]_{cor}))} + \overline{\rho} \| \nabla \bb{u}_\delta\|_{L^2( \mathscr{O}([w_\delta]_{cor}))} \|\nabla A[w_\delta]\|_{L^2( \mathscr{O}([w_\delta]_{cor}))}\nonumber \\
&&\quad + \delta\overline{\rho} \|\bb{u}_\delta\|_{L^2( \mathscr{O}([w_\delta]_{cor}))} \|A[w_\delta]\|_{L^2( \mathscr{O}([w_\delta]_{cor}))}\nonumber \\
&&\leq  C((\beta-1)^{-1},\| \bu_B\|_{H^1(\mathscr{O}_{max})} )\|A[w_\delta]\|_{W^{1,3}( \mathscr{O}([w_\delta]_{cor}))} \nonumber\\
&&\leq \frac1\kappa C((\beta-1)^{-1},\| \bu_B\|_{H^1(\mathscr{O}_{max})} )+ \frac\kappa2 \int_\Gamma |\Delta w_\delta|^2, 
\end{eqnarray*}
implying
$$ \kappa\|w_\delta\|_{H^2(\Gamma)}\leq C((\beta-1)^{-1},\| \bu_B\|_{H^1(\mathscr{O}_{max})} ), $$
while using the estimate $\eqref{press:est}$,  for any $\psi\in C_c^\infty(\Gamma)$ one has
\begin{eqnarray*}
&&\int_\Gamma \kappa\Delta w_\delta \Delta \psi \nonumber\\
&&=-\int_{ \mathscr{O}([w_\delta]_{cor})}\mathbb{S}(\nabla \bu_\delta):\nabla A[\psi]+\int_{ \mathscr{O}([w_\delta]_{cor})} (\rho_\delta \bu_\delta\otimes \bu_\delta): \nabla A[\psi]  \nonumber \\
&&\quad + \int_{ \mathscr{O}([w_\delta]_{cor})} (p(\rho_\delta)+\sqrt{\delta}\rho_\delta) \divg  A[\psi] - \delta  \int_{ \mathscr{O}([w_\delta]_{cor})} \nabla(\rho_\delta \bu_\delta):\nabla A[\psi] - \delta \int_{ \mathscr{O}([w_\delta]_{cor})} \rho_\delta \bu_\delta: A[\psi] \nonumber\\
&&\leq C( (\beta-1)^{-1},\| \bu_B\|_{H^1(\mathscr{O}_{max})})\left( \| A[\psi]\|_{W^{1,3}( \mathscr{O}([w]_{cor}))}+\| \psi\|_{H^{\frac12}(\Gamma)} \right) \nonumber\\
&&\leq C((\beta-1)^{-1}, \| \bu_B\|_{H^1(\mathscr{O}_{max})}) \| \psi\|_{W^{\frac23,3}(\Gamma)}, \label{H72:delta}
\end{eqnarray*}
so by classic elliptic regularity for the biharmonic operator \cite{karmanplates,grisvard}
\begin{eqnarray*}
\|w_\delta\|_{W^{\frac{10}3,3}(\Gamma)} \leq C(\kappa^{-1},(\beta-1)^{-1},\| \bu_B\|_{H^1(\mathscr{O}_{max})})
\end{eqnarray*}
which implies
$$ w_\delta  \rightharpoonup w,  \quad \text{ weakly in } H_0^2(\Gamma)\cap W^{\frac{10}3,3}(\Gamma). $$
Note that the compact imbedding of $W^{\frac{10}3,3}(\Gamma)$ into $C^{0,1}(\Gamma)$ implies that 
$$[w_\delta]_{cor} \to [w]_{cor} = s \quad \text{ in } C^{0,1}(\Gamma).$$
We now have the following estimates independent of $\delta$
\begin{eqnarray*}
&&\| \bu_\delta \|_{L^6(\mathscr{O}([w_\delta]_{cor}))}^2\leq C\| \bu_\delta \|_{H^1(\mathscr{O}([w_\delta]_{cor}))}^2 \leq  c\| \bu_B\|_{H^1(\mathscr{O}_{max})}^2,\\
&& 0\leq \rho_\delta <\overline{\rho}, \\
&& \|p(\rho_\delta)\|_{L^{\frac32}(\mathscr{O}([w_\delta]_{cor}))}\leq C((\beta-1)^{-1},\| \bu_B\|_{H^1(\mathscr{O}_{max})}), \\
&&\delta^{\frac34}\|\rho_\delta\|_{H^1(\mathscr{O}([w_\delta]_{cor}))} \leq C((\beta-1)^{-1}, \| \bu_B\|_{H^1(\mathscr{O}_{max})}), \\
&&\|w_\delta\|_{W^{\frac{10}3,3}(\Gamma)} \leq C((\beta-1)^{-1},\| \bu_B\|_{H^1(\mathscr{O}_{max})} ),
\end{eqnarray*}
The strong convergence of density, which ensures the convergence of the pressure, is quite different and more involved than in previous sections. It can be done as by localizing the arguments \cite[Section 7.2]{FN2018}) in the spirit of \cite[Section 6.2]{Breit}, which is based on the arguments developed by Lions \cite{Lions}, so:
\begin{eqnarray*}
(p(\rho_\delta)+\delta \rho_\delta)\chi_{|\mathscr{O}([w_\delta]_{cor})}  \to p(\rho)\chi_{|\mathscr{O}([w]_{cor})}, \quad \text{ in } L^p(\mathbb{R}^3), \quad p<\frac32.
\end{eqnarray*}
Without the $\delta$ terms, we can also repeat the calculation from $\eqref{press:delta}$ to obtain
\begin{eqnarray*}
\|p(\rho)\|_{L^2(\mathscr{O}([w_\delta]_{cor}))}\leq C((\beta-1)^{-1},\| \bu_B\|_{H^1(\mathscr{O}_{max})}).
\end{eqnarray*}
and the calculation from $\eqref{wH2}$ and $\eqref{wH72}$ to obtain
$$\kappa\|w\|_{H^{\frac72}(\Gamma)}\leq C((\beta-1)^{-1},\| \bu_B\|_{H^1(\mathscr{O}_{max})}). $$
We conclude that $(\rho,\bb{u},w)$ is a weak solution on the corrected domain in the sense of Definition \ref{cor:dom:def} and the proof of Theorem \ref{not:main} is finished.

\section{Existence of a weak Solution: Proof of Theorem \ref{main}}
Let $\bb{u}_B,\rho_B$ be given as in Theorem \ref{main}. For every stiffness $\kappa>0$, let $(\rho_\kappa,\bb{u}_\kappa,w_\kappa)$ be a corresponding weak solutions on corrected domains in the sense of Definition \ref{cor:dom:def} constructed in Theorem \ref{not:main} which all satisfy the estimate
$$\| \bu_\kappa \|_{H^1(\mathscr{O}([w_{\kappa}]_{cor})))}  \leq C$$
for $C$ uniform w.r.t. $\kappa$. The goal is to show the existence of a uniform $\beta>1$ w.r.t. $\kappa$ such that 
$$\frac{\beta}{|\mathscr{O}([w_{\kappa}]_{cor})|} \int_{\mathscr{O}([w_{\kappa}]_{cor}))}\rho_{\kappa}<\overline{\rho}. $$
Assume the contrary. Then there exists a convergent sequence $\kappa_n$ such that
$$\lim_{n\to \infty}\frac{1}{|\mathscr{O}([w_{\kappa_n}]_{cor})|} \int_{\mathscr{O}([w_{\kappa_n}]_{cor}))}\rho_{\kappa_n}=\overline{\rho}. $$
However, choosing a converging subsequence (denoted the same way) $(\rho_{\kappa_n},\bu_{\kappa_n},w_{\kappa_n})$, and noting that
$$ [w_{\kappa_n}]_{cor}\to s \quad \text{ in } C_0^{0,1}(\Gamma),$$
as $n\to \infty$, we conclude as in previous section that
$$ \bu_{\kappa_n}\chi_{|\mathscr{O}([w_{\kappa_n}]_{cor})} \to \bu\chi_{|\mathscr{O}(s)}, \quad \text{ in } L^2(\mathbb{R}^3) $$
and
$$\rho_{\kappa_n}\chi_{|\mathscr{O}([w_{\kappa_n}]_{cor})} \rightharpoonup \rho\chi_{|\mathscr{O}(s)} = \overline{\rho}\chi_{|\mathscr{O}(s)}, \quad \text{ weakly* in } L^\infty(\mathbb{R}^3) $$
as $n\to \infty$, so passing to the limit in the continuity equation gives a clear contradiction with the boundary data $\rho_B < \overline{\rho}$. Therefore, there exists a $\beta>1$ such that
$$\frac{\beta}{|\mathscr{O}([w_{\kappa}]_{cor})|} \int_{\mathscr{O}([w_{\kappa}]_{cor}))}\rho_{\kappa}<\overline{\rho}, \quad \text{ for every } \kappa>0,$$
which yields, as in previous section, the estimates
\begin{eqnarray*}
\|p(\rho_\kappa)\|_{L^2(\mathscr{O}([w_\kappa]_{cor}))}\leq C((\beta-1)^{-1},\| \bu_B\|_{H^1(\mathscr{O}_{max})}), 
\end{eqnarray*}
and consequently
\begin{eqnarray*}
\kappa\|w_\kappa\|_{C^{0,1}(\Gamma)}\leq C\kappa\|w_\kappa\|_{H^{\frac72}(\Gamma)}\leq C((\beta-1)^{-1},\| \bu_B\|_{H^1(\mathscr{O}_{max})}). 
\end{eqnarray*}
Choosing $\kappa\geq \kappa_0$, where 
$$\kappa_0=4C((\beta-1)^{-1},\| \bu_B\|_{H^1(\mathscr{O}_{max})}) $$
yields
$$\|w_\kappa\|_{C^{0,1}(\Gamma)}\leq \frac1\kappa C((\beta-1)^{-1},\| \bu_B\|_{H^1(\mathscr{O}_{max})}) \leq \frac14, $$
and consequently $[w_\kappa]_{cor} = w_\kappa$, so for every $\kappa\geq \kappa_0$, we have that $(\rho_\kappa,\bb{u}_\kappa,w_\kappa)$ is a solution in the sense of Definition \ref{weak:sol}. This concludes the proof of existence of a weak solution.

\section{Acknowledgments}  B.M. was supported by: the European Union Next Generation EU through the National Recovery and Resilience Plan 2021--2026; an institutional grant from the University of Zagreb, Faculty of Science: project IK IA 1.1.3. Impact4Math; and by the Croatian Science Foundation under the project number IP-2019-04-5982.   The research of  \v S. N. has been supported by  Praemium Academiae of \v S. Ne\v casov\' a. The Institute of Mathematics, CAS is supported by RVO:67985840. The research of M.P. was partially supported by the Czech Science Foundation (GA\v{C}R), project No. 25-16592S. S.T. was supported by the Science Fund of the Republic of Serbia, GRANT No TF C1389-YF, Project title - FluidVarVisc. J.T.W. was partially supported by the National Science Foundation (USA). DMS-2307538.

\end{document}